\documentclass{llncs}
\usepackage[german,english]{babel}
\usepackage{times,theorem}
\usepackage{latexsym,color,graphics}
\usepackage{diagrams1}
\diagramstyle[tight,centredisplay%
,dpi=600
,UglyObsolete
,heads=LaTeX%
]

\usepackage{epsfig}
\usepackage{amssymb}
\usepackage{amsmath}
\usepackage{amsfonts}
\usepackage{xspace}
\usepackage{graphicx}
\begingroup
\catcode`\~=11
\gdef\urltilde{\lower 0.6ex\hbox{~}}
\endgroup

 \renewcommand{\L}{\mathcal{L}}

\title{Internal Symmetry Group in Categorial Topology}
\author{Zoran Majki\'c}
\authorrunning{Zoran Majki\'c}
\institute{ISRST, Tallahassee, FL, USA\\
\email{majk.1234@yahoo.com}}

\newtheorem{propo}{Proposition}
\newtheorem{coro}{Corollary}

\begin{document}

\maketitle

\begin{abstract}
 The interdefinability of the universal concepts of category theory has been introduced by Lawvere. The perfect interdefinability between the objects and arrows of some category, defines the class of  Perfectly Symmetric Categories (PSC) where each category can be  represented equivalently by its arrows or by its objects only.
 Such symmetry, differently from the global categorial symmetry ( categorial-symmetry group $CS(\mathbb{Z})$ of all comma-propagation transformations), ia a local internal symmetry inside a given PSC category.

Given a PSC category (as a "geometric object") $\textbf{C}$ we can consider its properties (the categorial commutative diagrams) preserved under  actions  of a particular endofunctor $E$ which transforms any commutative diagram into an invariant "up to isomorphism" diagram. We show that this kind of internal categorial invariance is a phenomena of a local categorial symmetry under an Internal Catergorial Symmetry group $ICS(\mathbb{N})$ of all local enfdofunctorial transformations. Then we establish the  relationships between this local internal symmetry and global general symmetry between n-dimensional levels (the comma categories obtained from a PSC category $\textbf{C}$) . We show that if a base category $\textbf{C}$ is a PSC, then all its ne-dimensional levels are PSC as well.
\end{abstract}

\section{Introduction to Symmetry and Categorial Topology of n-dimensional Levels}
In Categorial Topology, given a category (as a "geometric object") \cite{Cart01,GrDi60,Vacu21,DeRi15}
we can consider its properties preserved under continuous action (a "deformation") of a some transformations.

However, the Metacategory space, valid for all  categories,  can not be defined by using well-know Grothendeick's approach with discrete ringed spaces as demonstrated in \cite{Majk24G}. However we can consider any category $\textbf{C}$ as an abstract geometric object,
 that is, a discrete space where the points  are the objects of this category and arrows between objects as the paths. Based on this approach, we  can define the Cat-vector space $V$ valid for all categories with addition operation for the vectors, and their inner product. For the categories where we define the norm ("length") of the vectors in $V$ we can define also the outer (wedge) product of the vectors in $V$ and we show that such Cat-algebra satisfies two fundamental properties of the Clifford geometric algebra \cite{Majk24G}.

Most related philosophical questions deal with specific symmetries, objectivity,
interpreting limits on physical theories, classification, and laws of nature \cite{Shaw18,Eins36}. The recent history of the philosophy of mathematics is largely focused on grasping and defining the nature and essence of mathematics and its objects \cite{Korz94}.
Categorial symmetries has been introduced by my second PhD thesis, 27 years ago, with a number of applications in Computer Science, with 6wo kinds of symmetries:
\begin{itemize}
  \item \emph{Internal symmetry} of a given category between its objects and arrows;
  \item \emph{global symmetry} valid \emph{for all} categories, that is for the whole category theory composed by its principal concepts (concrete categories, functors and natural transformations).  This symmetry is based on the transformation of three principal categorial concepts called "comma-propagation".
\end{itemize}
For a long time I did not publish them. However,  I have published  a book on Data Integration \cite{Majk14} by using exclusively category theory formalism and introduced these ideas of categorial \emph{internal symmetry} in it. Finally, after all this work finally I extended my PhD thesis in recent book \cite{Majk23} both with revisited theoretical part and significant applications in Computer Science (for natural deduction, lambda-calculus, process-algebras and database theory).
Moreover, after that I have published  the papers \cite{Majk23s,Majk25a} dedicated to formalization of the symmetry group of the global symmetry.

 So, in this paper I will extend the work about \emph{internal symmetry}  by formal definitions (non provided in  \cite{Majk23}) of its category-symmetry group $ICS$.
 It is well known that, in a given category $\textbf{C}$, its arrows in $Mor_\textbf{C}$ can be composed by the \emph{partial} operation $\circ:Mor_\textbf{C}\times Mor_\textbf{C} \rightarrow Mor_\textbf{C}$  to obtain more complex arrows, and for each object $a \in Ob_\textbf{C}$,  the identity arrow $id_a \in Mor_\textbf{C}$ is the "representation" of this object. From this point of view, the properties of the category can be represented by using \emph{only} its arrows.
Such compositional property for the objects and the possibility to represent the category \emph{only} by using the objects of a category generally does not hold.

That is, generally we have no the symmetry between the arrows and the objects of a given category. Our aim is also to investigate the class of categories which are \emph{conceptually symmetric}, that is, where each arrow can be equivalently represented by some object of this category, and in which we can define the partial operation $*:Ob_\textbf{C}\times Ob_\textbf{C} \rightarrow Ob_\textbf{C}$, able to represent the "composition of the objects" of the category. This symmetry appear as an \emph{arrows-to-objects transformation} and is a kind of  \emph{internal symmetry}, called  "conceptual symmetry" as well.

This kind of \emph{internal} categorial symmetry is denominated also as a conceptual categorial symmetry of transformation of the arrows into the objects of the category preserving the properties of commutative diagrams (composition of the arrows).  A category $\textbf{C}$ for which such internal transformation is possible is called \emph{perfectly symmetric} category (PSC), for which there exist the duality operator from the morphisms (arrows) into the objects of this category
\begin{equation}\label{dualop}
B_\top:Mor_\textbf{C} \rightarrow Ob_\textbf{C}
\end{equation}
which, applied to an arrow $f$  returns with its "conceptualized object" $\widetilde{f} = B_\top(f)$.

The most general internal category, with a minimum requirements, called \emph{perfectly symmetric categories}, is determined by only duality operator $B_T$ in (\ref{dualop}) which must also satisfy the following properties:
\begin{definition}\label{def:PerfectSym}
A   $\textbf{C}$ is  \emph{perfectly symmetric category} (PSC) if there exists
a duality operator $B_\top$ from arrows  into the objects (which is dual type of categorial primitive concepts) and hence if there exist the composition $g\circ f $ of two arrows in  $\textbf{C}$, then we have the following homomorphism $B_\top:(Mor_\textbf{c}, \circ) \rightarrow (\widetilde{Ob}_\textbf{C}, *)$, where $\widetilde{Ob}_\textbf{C} \subset Ob_\textbf{C}$ is the subset of conceptualized objects in $\textbf{C}$ such that,
\begin{equation}\label{eq:SymHomomorph}
B_\top(g\circ f) = B_\top(g)* B_\top(f)
\end{equation}
where $B_\top(g)$ and $B_\top(f)$ are the "conceptualized objects" in $\textbf{C}$ denote shortly by $\widetilde{g}$ and $\widetilde{f}$ respectively. The duality operatot $B_\top$ must satisfy the  "representability principle" that for each identity arrow $id_a:a\rightarrow a$ it must be valid the following isomorphism in $\textbf{C}$ (for each object $a$ the conceptualized object obtained from its identity arrow must be isomorphic to it):
\begin{equation} \label{eq:representatib}
is_a:B_\top(id_a) \simeq a
\end{equation}
so that $id_a= is_a\circ is_a^{-1}:a\rightarrow a$.
\end{definition}
The objects obtained from arrows will be called
\emph{conceptualized} objects. For example let us consider the
polynomial function $f(x) =(x^2-3x +1)$, with the domain being the
closed interval of reals from 0 to 1, and codomain $\mathbb{R}$ of
all reals. It is an arrow $f:[0,1] \rightarrow \mathbb{R}$ in the
category $\textbf{Set}$. The conceptualized object obtained from
this arrow, denoted by $\widetilde{f}$, is the \emph{set} (hence an
object in $\textbf{Set}$) equal to the graph of this polynomial
function. That is, $\widetilde{f} = \{(x, f(x))~ |~ x \in [0,1] \} \in Ob_{Set}$. So, the $\textbf{Set}$) is a perfectly symmetric category where the partial operation of composition $*$ used in (\ref{eq:SymHomomorph}) is just the composition of binary relations.

 In general, all constructions of algebraic topology are functorial; the notions of category, functor and natural transformation originated here. The arrow categories are more simple forms of the \emph{comma} categories and were introduced by Lawvere \cite{Lawve63} in the context of the interdefinability of the universal concepts of category theory. The basic idea is the elevation of arrows of one category $\textbf{C}$ to objects in another.

It is well known that, given a base category $\textbf{C}$, we can represent its morphisms as objects by using a derived \emph{arrow} category $\textbf{C}\downarrow \textbf{C}$ (a special case of the comma category), such that for a given arrow $f:a \rightarrow b$ between two objects $a,b \in Ob_C$, we have the object denoted by $\langle a,b,f\rangle  \in Ob_{C\downarrow C}$ in this arrow category.
 With this, we introduce the infinite hierarchy of the arrow categories, for a given base category  $\textbf{C}$, as follows.
The notion of hierarchy of common arrow categories for a given base category $\textbf{C}$ is inductively defined for $n\geq 1$ by
      \begin{equation}\label{eq:n-levels0}
      (\textbf{C}\downarrow\textbf{C})^1 =_{def} \textbf{C}\downarrow\textbf{C},~~~~~~  (\textbf{C}\downarrow\textbf{C})^{n+1} =_{def} (\textbf{C}\downarrow\textbf{C})^n \downarrow (\textbf{C}\downarrow\textbf{C})^n
      \end{equation}
and, derived from it, the \emph{n-dimensional levels} based \emph{only} on the base category $\textbf{C}$, instead on the its first arrow category $(\textbf{C}\downarrow\textbf{C})$ used in (\ref{eq:n-levels0}) for $n\geq 1$, denoted by
      $\textbf{C}_{n+1} =_{def} (\textbf{C}\downarrow\textbf{C})^{n}$
That is, the n-dimensional levels are defined inductively for $n\geq 1$ by
\begin{equation}\label{eq:n-levels0s2}
\textbf{C}_1 =_{def}  \textbf{C},  ~~~~~~~~\textbf{C}_{n+1}=_{def} \textbf{C}_{n}\downarrow\textbf{C}_{n}
\end{equation}
In what follows we will use the following standard comma projection functors for all n-dimensional levels of a given base category $\textbf{C}$, and $n\geq 1$:
\begin{itemize}
  \item The first comma projection, $F_{st} = (F_{st}^0,F_{st}^1):\textbf{C}_{n+1}\rightarrow \textbf{C}_{n}$,
      such that:\\
       for each object $\langle a,b,f\rangle$ in $\textbf{C}_{n+1}$,  $F_{st}(\langle a,b,f\rangle) = F_{st}^0(\langle a,b,f\rangle) = a$;\\
       for each arrow $(h_1,h_2)$ in $\textbf{C}_{n+1}$,  $F_{st}(h_1,h_2) = F_{st}^1(h_1,h_2) = h_1$.
  \item The second  projection, $S_{nd} = (S_{nd}^0,S_{nd}^1):\textbf{C}_{n+1}\rightarrow \textbf{C}_{n}$,
      such that:\\
       for each object $\langle a,b,f\rangle$ in $\textbf{C}_{n+1}$,  $S_{nd}(\langle a,b,f\rangle) = S_{nd}^0(\langle a,b,f\rangle) = b$;\\
       for each arrow $(h_1,h_2)$ in $\textbf{C}_{n+1}$,  $S_{nd}(h_1,h_2) = S_{nd}^1(h_1,h_2) = h_2$.
   \item The natural transformation  $\psi:F_{st} \rTo^\centerdot S_{nd}$.\\
              Note that a natural transformation $\psi$ associate to every object $X$ an arrow $\psi_X:F_{st}^0(X) \rightarrow S_{nd}^0(X)$, and this mapping can be represented by the function denoted by
       \begin{equation} \label{projectionTrnasf}
       J^{-1}:Ob_{\textbf{C}_{n+1}} \rightarrow Mor_{\textbf{C}_{n}}
       \end{equation}
For example, we have that $J^{-1}(\langle a,b,f\rangle) = \psi_{\langle a,b,f\rangle} =f$.
 \item We introduce the \emph{encapsulation} operator operator $J:Mor_{\textbf{C}_{n}}\rightarrow Ob_{\textbf{C}_{n+1}}$, as operator inverse to the operator $J^{-1}$, such that for each arrow $f:a\rightarrow b$ in $\textbf{C}_{n}$, we obtain       $J(f) = \langle a,b,f\rangle$.
 \item The \emph{diagonal} functor ,
\begin{equation} \label{eq:diagFunc}
\blacktriangle = (\blacktriangle^0, \blacktriangle^1):\textbf{C}\rightarrow(\textbf{C}\downarrow\textbf{C})
\end{equation}
such that for any object $a \in \textbf{C}$, it holds that $\blacktriangle^0(a) = J(id_a) = \langle a,a, id_a\rangle$, and for each arrow $f:a\rightarrow b$, $\blacktriangle^1(f) = (f;f)$.
\end{itemize}
The next proposition \cite{Majk23} is important for the relationship between n-dimensional levels and symmetrical categories (for the conceptually closed categories defined in what follows:
\begin{propo} \label{prop:A2}
For each n-dimensional level $\textbf{C}_{n}$, $n\geq 2$, with its identity endofunctor $Id_{\textbf{C}_{n}}$,
there exist the natural transformations $\sigma:\blacktriangle \circ F_{st}\rTo^\centerdot Id_{\textbf{C}_{n}}$ and $\sigma^{-1}:Id_{\textbf{C}_{n}}\rTo^\centerdot \blacktriangle\circ S_{nd}$, such that\footnote{We recall that for the simplicity we consider the natural transformations as a function from objects into arrows that are components of this natural transformation.
}
\begin{equation} \label{eq:sigmaNatTransf}
 \sigma = (\psi \blacktriangle^0F_{st}^0;\psi), ~~~\emph{and} ~~~\sigma^{-1} = (\psi; \psi \blacktriangle^0S_{nd}^0)
 \end{equation}
  so  wit vertical composition $\sigma^{-1}\bullet\sigma = (\psi;\psi)$.
 \end{propo}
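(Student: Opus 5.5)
The plan is to check componentwise that the families in (\ref{eq:sigmaNatTransf}) are arrows of $\textbf{C}_{n}=\textbf{C}_{n-1}\downarrow\textbf{C}_{n-1}$ with the stated domains and codomains, and then to verify naturality. In this proof $F_{st},S_{nd}:\textbf{C}_{n}\rightarrow\textbf{C}_{n-1}$ and $\blacktriangle:\textbf{C}_{n-1}\rightarrow\textbf{C}_{n}$, with $\psi:F_{st}\rTo^\centerdot S_{nd}$ as before. Fix an object $X=\langle a,b,f\rangle$ of $\textbf{C}_{n}$, where $f:a\rightarrow b$ is an arrow of $\textbf{C}_{n-1}$; this requires $n\geq 2$. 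Then
\[
\blacktriangle^0F_{st}^0(X)=\langle a,a,id_a\rangle, \qquad \blacktriangle^0S_{nd}^0(X)=\langle b,b,id_b\rangle,
\]
and $\psi_X=f$.

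\textbf{Components of $\sigma$.} The proposed component is $\sigma_X=(\psi_{\blacktriangle^0F^0_{st}(X)};\psi_X)=(id_a;f)$, read as the pair of $\textbf{C}_{n-1}$-arrows $(h_1,h_2)=(id_a,f)$. We must check that it is a comma arrow $\langle a,a,id_a\rangle\rightarrow\langle a,b,f\rangle$. This requires $h_1:a\rightarrow a$ and $h_2:a\rightarrow b$ satisfying $f\circ h_1=h_2\circ id_a$, which reads $f\circ id_a=f\circ id_a$ and so holds trivially.

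\textbf{Components of $\sigma^{-1}$.} Similarly, $\sigma^{-1}_X=(\psi_X;\psi_{\blacktriangle^0S^0_{nd}(X)})=(f;id_b)$ must be an arrow $\langle a,b,f\rangle\rightarrow\langle b,b,id_b\rangle$. The comma condition is $id_b\circ f=id_b\circ f$, which again holds.

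\textbf{Naturality.} Take an arrow $(h_1,h_2):\langle a,b,f\rangle\rightarrow\langle a',b',f'\rangle$, so that $f'\circ h_1=h_2\circ f$. Since $\blacktriangle F_{st}(h_1,h_2)=(h_1,h_1)$, the naturality square for $\sigma$ amounts to
\[
(h_1,h_2)\circ(id_a,f)=(id_{a'},f')\circ(h_1,h_1).
\]
Composition in the comma category is componentwise, so this reduces to $h_1=h_1$ in the first component and $h_2\circ f=f'\circ h_1$ in the second. The second equation is exactly the comma condition on $(h_1,h_2)$. For $\sigma^{-1}$ the analogous square is
\[
(h_2,h_2)\circ(f,id_b)=(f',id_{b'})\circ(h_1,h_2),
\]
which again reduces to $h_2\circ f=f'\circ h_1$ and $h_2=h_2$.

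\textbf{Vertical composition.} Componentwise, $\sigma^{-1}_X\circ\sigma_X=(f\circ id_a, id_b\circ f)=(f,f)=(\psi_X;\psi_X)$. This is an arrow $\blacktriangle F_{st}(X)=\langle a,a,id_a\rangle\rightarrow\blacktriangle S_{nd}(X)=\langle b,b,id_b\rangle$ and equals $\blacktriangle^1(f)$.

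The computations above are routine. The main obstacle I expect is notational: making precise the paper's reading of $\psi\blacktriangle^0F^0_{st}$ as the whiskered component $\psi_{\blacktriangle^0F^0_{st}(X)}$ rather than as a composite, and fixing the order convention in $(\cdot;\cdot)$. Once these are fixed, I will state explicitly that both are chosen so that the result is the pair $(id_a,f)$ (resp. $(f,id_b)$) with the first entry acting on domains.
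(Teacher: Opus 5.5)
Your proposal is correct and follows essentially the same route as the paper. The paper gives no separate proof: it cites the book and only computes, for an object $J(f)$, that $\sigma(J(f))=(id_{dom(f)};f)$, $\sigma^{-1}(J(f))=(f;id_{cod(f)})$ and $(\sigma^{-1}\bullet\sigma)(J(f))=(f;f)$, which are exactly your component formulas. Your explicit checks of the comma conditions and of the two naturality squares, both of which reduce to $h_2\circ f=f'\circ h_1$, fill in what the paper leaves implicit.
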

 For example, for any object $J(f)$ in $\textbf{C}_{2}$, of an arrow $f$  in $\textbf{C}_{1} = \textbf{C}$, we obtain that
 \begin{equation} \label{eq:sigmaNatTransf11}
 \sigma(J(f)) = (id_{dom(f)};f), ~~~\emph{and} ~~~\sigma^{-1}(J(f)) = (f;id_{cod(f)})
 \end{equation}
 where $dom(f)$ and $cod(f)$ are the domain and codomain objects of the arrow $f$. So, \\$(\sigma^{-1}\bullet\sigma)(J(f)) = \sigma^{-1}(J(f))\circ \sigma(J(f)) = (f;id_{cod(f)})\circ (id_{dom(f)};f)\\ = (f\circ id_{dom(f)}:id_{cod(f)}\circ f) = (f;f)$.

The reason that we introduced n-dimensional level topology is to make in evidence \emph{geometric interpretations} \cite{Majk23,Majk23s,Majk25a} of the hierarchy of comma categories obtained from any given category $\textbf{C}$. In this case in the $\textbf{C}_1$  each arrow is just representable as a linear vector, that is 1-dimensional level. Any arrow $(h_1;k_1):J(f_1)\rightarrow J(g_1)$ in $\textbf{C}_2$  is geometrically representable by bidimensional commutative rectangle   in  $\textbf{C}$), so $\textbf{C}_2$ represents 2-dimensional level. An arrow $((l_1;l_2);(l_3;l_4)):J(h_1;k_1)\rightarrow J(h_2;k_2)$ in $\textbf{C}_3$  is geometrically representable by 3-dimensional commutative box, so $\textbf{C}_3$ represents 3-dimensional level, etc...

By introduction of the n-dimensional levels, which are the arrow categories, we need to consider the relationship between the isomorphic arrows in the base category $\textbf{C}$ and its arrow category $\textbf{C}_2 = \textbf{C}\downarrow\textbf{C}$. Consequently, from the fact that the objects of $\textbf{C}\downarrow\textbf{C}$ are obtained from the arrows of $\textbf{C}$ the isomorphic arrows in $\textbf{C}\downarrow\textbf{C}$ introduce the new concept of a kind of "isomorphism of arrows", together with the standard concept of isomorphism of objects, and this new kind of "isomorphism" we can introduce by the following definition \cite{Majk23}:
\begin{definition} \label{def:newIsomorph}
Let $h:a\rightarrow c$ and $k:b \rightarrow d$ be two arrows of $\textbf{C}$. We tell that these two arrows $h$ and $k$ are "\emph{equal up to isomorphism}", denoted by $$h ~\cong ~k$$ iff there is the (standard) isomorphism\footnote{It is easy to verify that each isomorphic arrow in $\textbf{C}\downarrow\textbf{C}$ is a pair of isomorphic arrows of $\textbf{C}$. Thus, in this case we have two isomorphic arrows in $\textbf{C}$: $is_1:a\rightarrow b$ and $is_2:c\rightarrow d$.}
$is =(is_1;is_2):J(h) \rightarrow J(k)$ (satisfying the \emph{equality} $k\circ is_1 = is_2\circ h$) of the objects  $J(h)$ and $J(k)$ of the arrow category $\textbf{C}\downarrow\textbf{C}$.
\end{definition}
 What is important for functorial semantics of derivation of the objects in a given category $\textbf{C}$ from its arrows, is that in this case we can use  the arrow categories $\textbf{C}\downarrow\textbf{C}$ where the arrows $g:a\rightarrow b$ of $\textbf{C}$ are encapsulated as the objects $J(g)$ in $\textbf{C}\downarrow\textbf{C}$
 so the functorial representation has to be given by the functors $T_e = (T_e^0, T_e^1):\textbf{C}\downarrow\textbf{C}\rightarrow \textbf{C}$.

 Consequently, based on this hierarchy of the n-dimensional levels,  for perfectly symmetric category $\textbf{C}$ with given duality operator $B_T = T_e^0 J$, we can represent the functorial categorial symmetry in a given n-dimensional level, for $n\geq 1$, by the following diagram:
\begin{equation} \label{eq:funct-symmetry}
\begin{diagram}
 a         &\rTo^{f}          & b    & && J(f)= \langle a,b,f\rangle && &&\widetilde{f} = B_T(f)\\
    \dTo^{h_1}       & &               \dTo_{h_2} &\Leftrightarrow &&  \dTo^{(h_1,h_2)} && T_e~ \mapsto &&  \dTo_{T_e(h_1;h_2)}\\
 c  &  \rTo^{g}               &   d & &&J(g)= \langle c,d,g\rangle && && \widetilde{g}= B_T(g)\\
  & in ~\textbf{C}_n  && && in ~\textbf{C}_{n+1}=\textbf{C}_n\downarrow\textbf{C}_n &&&&in ~\textbf{C}_n
\end{diagram} ~~~~~~
\end{equation}
As we can see, in order to satisfy such a categorial symmetry  property, not only the "conceptualized" objects $\widetilde{f}$ and $\widetilde{g}$ (obtained from the respective arrows $f:a\rightarrow b$ and $g:c\rightarrow d$) must exist in $\textbf{C}_n$, but they also must coherently represent the rule of these arrows\footnote{ Especially, if $f = f_2\circ f_2$ is a composed arrow in $\textbf{C}$,  how the 'conceptualized' object $\widetilde{f_2\circ f_2}$ is structurally composed by its 'conceptualized' components $\widetilde{f}_1$ and $\widetilde{f}_2$.
},
and  the arrow $T_e(h_1;h_2)$ in  $\textbf{C}_n$ must exist as well.


Obviously, this functor $T_e$ does not exist generally for each perfectly symmetric category, and our work is to define the subclass of perfectly symmetric categories $\textbf{C}$ where such a construction is possible. Clearly, the categories that support such functorial "reflection" of arrows into its objects are more expressive and powerful.
So, it has been provided \cite{Majk23} the following hierarchy of internally symmetric categories from the bottom to the top symmetry level (where the lower class of internally symmetric categories contain the higher levels of internally symmetric categories:
\begin{enumerate}
  \item Perfectly  symmetric  (with duality operator $B_\top$) categories (PSC)
  \item  Conceptually closed (by a functor $T_e$) categories (CoCC)
  \item Symmetry-extended categories (SEC)
  \item Imploded categories (IMC)
\end{enumerate}
with IMC $\subset$ SEC $\subset$ CoCC $\subset$ PSC,
where the imploded categories are considered as the upper bound symmetry case of the n-dimensional levels, and represent the collapse of all n-dimensional levels into their basic category. If we consider, for example, the (n+1)-dimensional level as the metatheory of the n-dimensional level, then in the imploded category we can express all its metatheories.

  Consequently, in Categorial Topology, given a category (as a "geometric object") $\textbf{C}$ we can consider its properties preserved under continuous action (a "deformation") of a symmetry group of transformations. Indeed, we can consider any category $\textbf{C}$ as an abstract geometric object \cite{Majk24G}, that is a discrete space where the points of such abstract space are the objects of this category and arrows between objects as the paths: given any two points (two objects of the category) we can have a number of oriented paths from first to the second point, some of them equal (commutative diagrams in this category between these two objects).

 In geometry, groups arise naturally in the study of symmetries and geometric transformations: The symmetries of an object form a group, called the symmetry group of the object, and the transformations of a given type form a general group. Lie groups appear in symmetry groups in geometry, and also in the Standard Model of particle physics. The Poincare group is a Lie group consisting of the symmetries of spacetime in special relativity. Point groups describe symmetry in molecular chemistry.

In order to have a self-contained paper for all people that have no my book \cite{Majk23}, in the Section 2 will be provided a short mathematical definition of internal symmetry hierarchy introduced above, and used the same notations  and symbols used in the book. The original contributions, with formal definition of the internal symmetry group of transformations valid for whole hierarchy, are then presented in  the Section 3.
\section{Introduction of the Hierarchy of Internal Symmetries}
The concept of a topology, conceptually closed categories,  has been introduced in my PhD thesis ~\cite{Majk98} with a number of its applications.

The minimal requirements for internal symmetry of a category are provided by the existence of the duality operator $B_T$ with the properties specified by Definition \ref{def:PerfectSym}, which defines the class of perfectly symmetric categories (PSC) and, as an example of PSC, we provided in previous section the $\textbf{Set}$ category. If we introduce another requirements, then we generate a more specific subclasses of PSC.
So,  we can begin to define the hierarchy of internal symmetries of the categories, as follows.\\\\
\textbf{1. Conceptually Closed Categories (CoCC)}:
\\
This set of categories is a strict subset of perfectly symmetric categories heaving the duality operator $B_T$.
This kind of more specific category symmetry is obtainable when there exists descending functor $T_e=(T_e^0,T_e^1):\textbf{C}\downarrow\textbf{C} \rightarrow \textbf{C}$ (because of that we call this symmetry  "\emph{closed} by functor"), as it was anticipated by figure (\ref{eq:funct-symmetry}), inverse (in $\textbf{Cat}$ ) to the diagonal functor  $\blacktriangle =(\blacktriangle^0,\blacktriangle^1):\textbf{C}\longrightarrow (\textbf{C}\downarrow \textbf{C})$ introduced in (\ref{eq:diagFunc}).

In this case
the duality operator $B_\top$ introduced for the conceptually symmetric categories in Definition \ref{def:PerfectSym}, can be defined by
\begin{equation} \label{eq:CoCC}
 B_\top =_{def} T_e^0 J
\end{equation}
 An important subset of symmetric categories are the Conceptually Closed Categories (CoCC), specified as follows: \index{conceptually closed categories (CoCC)}
\begin{propo}  \label{def:symCat} \cite{Majk98}\textsl{A conceptually closed} category (CoCC) is a \emph{perfectly} symmetric category $~\textbf{C}$ for which there exists a functor $T_e = (T_e^0,T_e^1): (\textbf{C}\downarrow \textbf{C})\longrightarrow
\textbf{C}$, "inverse" to diagonal functor $\blacktriangle$, such that for the object-component of this functor   $T_e^0 = B_T \psi$, where $\psi:F_{st} \rTo^\centerdot S_{nd}$ is the natural transformation.\\
Then we obtain the following natural isomorphism:
 \begin{equation} \label{eq:ClosedIso}
 ~\varphi:T_e \circ \blacktriangle~\backsimeq~I_{\textbf{C}} ~~~~~~~~~~\emph{and} ~~~~~~~~~~ \varphi^{-1}:I_{\textbf{C}}~\backsimeq~T_e \circ \blacktriangle
\end{equation}
where $I_\textbf{C}$ is  the identity endofunctor for $\textbf{C}$. Thus, for each arrow n $\textbf{C}$, $f:a\rightarrow b$ between objets $a$ and $b$i, we have also the derived by the functor $T_e$ the arrow
\begin{equation} \label{eq:ClosedIso8}
\varphi(b) \circ T_e^1(f;f)\circ \varphi^{-1}(a):a \rightarrow b
\end{equation}
 between these objects.
\end{propo}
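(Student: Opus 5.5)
We have to show that $\varphi: T_e\circ\blacktriangle \simeq I_{\textbf{C}}$ is a natural isomorphism; the derived arrow (\ref{eq:ClosedIso8}) then follows at once. The plan is to build $\varphi$ componentwise from the representability isomorphisms of Definition \ref{def:PerfectSym}, and then check naturality using the functoriality of $T_e$.

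\emph{Objects.} For an object $a$ of $\textbf{C}$ we have $\blacktriangle^0(a)=J(id_a)=\langle a,a,id_a\rangle$. By the hypothesis $T_e^0=B_\top\psi$, together with $\psi_{J(id_a)}=J^{-1}(J(id_a))=id_a$, we get
\[
T_e^0\blacktriangle^0(a)=B_\top(id_a).
\]
The representability principle (\ref{eq:representatib}) supplies an isomorphism $is_a:B_\top(id_a)\simeq a$. We therefore set $\varphi(a)=_{def}is_a$ and $\varphi^{-1}(a)=_{def}is_a^{-1}$. Each component is then an isomorphism by construction.

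\emph{Naturality.} For $f:a\rightarrow b$ we must verify
\[
\varphi(b)\circ T_e^1(f;f)=f\circ\varphi(a),
\]
or equivalently $T_e^1(f;f)=is_b^{-1}\circ f\circ is_a$. This is the main obstacle. Functoriality of $T_e$ alone does not force it, because the arrow part $T_e^1$ is constrained only by the requirement that $T_e$ be ``inverse'' to $\blacktriangle$.

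The approach is to read that ``inverse'' clause as the defining coherence condition on $T_e^1$ restricted to the image of $\blacktriangle$. Concretely, require that $T_e^1\blacktriangle^1$ agree with the identity functor up to the chosen $is$'s. This means defining $T_e^1(f;f)$ exactly as the conjugate $is_b^{-1}\circ f\circ is_a$. This choice respects composition, since
\[
(is_c^{-1}\circ g\circ is_b)\circ(is_b^{-1}\circ f\circ is_a)=is_c^{-1}\circ g\circ f\circ is_a,
\]
and it respects identities, since $is_a^{-1}\circ id_a\circ is_a=id_{B_\top(id_a)}$. Hence it is compatible with $T_e$ being a functor on the full subcategory $\blacktriangle(\textbf{C})$. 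With this, the naturality square commutes identically.

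\emph{Inverse and conclusion.} Naturality of $\varphi^{-1}$ follows by inverting the square. The vertical composites $\varphi\bullet\varphi^{-1}$ and $\varphi^{-1}\bullet\varphi$ are identities componentwise because $is_a\circ is_a^{-1}=id_a$. This gives (\ref{eq:ClosedIso}).

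Finally, for any $f:a\rightarrow b$, naturality yields
\[
\varphi(b)\circ T_e^1(f;f)\circ\varphi^{-1}(a)=f\circ\varphi(a)\circ\varphi^{-1}(a)=f.
\]
So (\ref{eq:ClosedIso8}) is a well-typed arrow $a\rightarrow b$ recovered from $T_e$, and it coincides with $f$.
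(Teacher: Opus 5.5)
Your object-level step is exactly the paper's proof. That proof consists only of $T_e\blacktriangle(a)=T_eJ(id_a)=B_\top(id_a)$, so that $\varphi(a)$ is the representability isomorphism $is_a$; the paper never checks naturality. You rightly observe that naturality at $f:a\rightarrow b$ means $T_e^1(f;f)=is_b^{-1}\circ f\circ is_a$, and that the hypotheses do not force this. But the step you use to close the gap is not valid.

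$T_e$ is a \emph{given} functor, so $T_e^1(f;f)$ is already fixed and cannot be defined. Nor can you alter $T_e$ only on $\blacktriangle(\textbf{C})$. In $\textbf{C}\downarrow\textbf{C}$ one has $(f;f)=(f;id_b)\circ(id_a;f)$ through $J(f)$, which lies outside the image of $\blacktriangle$. So functoriality on the whole comma category already pins down $T_e^1(f;f)=T_e^1(f;id_b)\circ T_e^1(id_a;f)$. Checking identities and composition inside the full subcategory $\blacktriangle(\textbf{C})$ says nothing about this.

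The gap is real, not just a missing verification. If $T_e$ is a functor with $T_e^0=B_\top\psi$, then so is $(h_1;h_2)\mapsto\alpha_{J(g)}\circ T_e^1(h_1;h_2)\circ\alpha_{J(f)}^{-1}$, for any family of automorphisms $\alpha_X$ of $T_e^0(X)$. In $\textbf{Set}$, take $a=\{0,1\}$, let $\alpha_{J(id_a)}$ swap $(0,0)$ and $(1,1)$, and let all other $\alpha_X$ be identities. For $f:\{\ast\}\rightarrow a$ picking $0$, the square $is_a\circ T_e^1(f;f)=f\circ is_{\{\ast\}}$ then fails.

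Moreover, the identity you end with, $\varphi(b)\circ T_e^1(f;f)\circ\varphi^{-1}(a)=f$, is exactly $(\tau^{-1}\bullet\tau)(J(f))=\psi(J(f))$. That is the extra axiom by which the paper singles out symmetry-extended categories in Definition \ref{def:symCatExtend}. The paper's consequence (\ref{eq:ClosedIsoMor}) of that axiom is literally your ``definition'' of $T_e^1(f;f)$. So you have imported the SEC axiom, and your last sentence proves more than the proposition. The proposition only asserts that (\ref{eq:ClosedIso8}) is \emph{some} arrow $a\rightarrow b$; its equality with $f$ in $\textbf{Set}$ is recorded separately as a special fact.

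The defensible route is the paper's implicit one. Treat the natural isomorphism $T_e\circ\blacktriangle\simeq I_{\textbf{C}}$ as the content of the hypothesis that $T_e$ is ``inverse'' to $\blacktriangle$, and take its components as the representability isomorphisms $is_a$. Naturality is then assumed, not manufactured. Your correct observation that naturality of $\varphi$ at $f$ is equivalent to the SEC identity is worth stating explicitly. It shows that under this reading every CoCC is symmetry-extended, which conflicts with the paper's claim that SEC is a strict subclass of CoCC.
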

In fact it is easy to show that natural transformation $\varphi$ is isomorphic, that is, for any object $a$ in $\textbf{C}$, $T_e \circ \blacktriangle (a) = T_eJ(id_a) =$ (from (\ref{eq:CoCC}) ) $= B_T(id_a)$, so its component arrow is $\varphi(a):B_T(id_a) \rightarrow a$ just the isomorphism (\ref{eq:representatib}) valid in a perfectly symmetric category.

It has been shown \cite{Majk98,Majk23} that each CoCC introduces two particular natural transformations:
\begin{coro} \label{coro:A15} \cite{Majk23}
Let $\textbf{C}$ be a Conceptual Closed Category. Then there are two particular natural transformations $\tau:F_{st} \rTo^\centerdot T_e$ and $\tau^{-1}:T_e \rTo^\centerdot S_{nd}$, for the first and second projections $F_{st},S_{nd}:\textbf{C}\downarrow\textbf{C}\rightarrow\textbf{C}$ and this new "middle projection" $T_e$ of the conceptual closure, defined by the following vertical composition
\begin{equation} \label{eq:CoCCnat}
\tau = (T_e\sigma)\bullet (\varphi^{-1}F_{st}) ~~~\emph{and}~~~
\tau^{-1} = (\varphi S_{nd})\bullet (T_e\sigma^{-1})
\end{equation}
where the natural transformations $\sigma:\blacktriangle \circ F_{st}\rTo^\centerdot Id_{\textbf{C}}$ and $\sigma^{-1}:Id_{\textbf{C}}\rTo^\centerdot \blacktriangle\circ S_{nd}$ (have the vertical composition $\sigma^{-1}\bullet\sigma =(\psi;\psi)$), and  $~\varphi:T_e \circ \blacktriangle~\backsimeq~I_{C}$ is the natural isomorphism in (\ref{eq:ClosedIso}) with its inverse $\varphi^{-1}$,
and hence
\begin{equation} \label{eq:CoCCnat2}
\tau^{-1}\bullet\tau = (\varphi S_{nd})\bullet (T_e(\psi;\psi))\bullet (\varphi^{-1}F_{st})
\end{equation}
shown by the following diagram
\begin{equation}\label{eq:tauVcomp}
\begin{diagram}
\textbf{C}\downarrow\textbf{C}& &\pile{\rTo^{F_{st}}\\ ~~~~~~\downarrow \tau \\ \rTo~{T_e} \\~~~~~~\downarrow \tau^{-1} \\ \rTo_{S_{nd}}}&& \textbf{C}
\end{diagram}
\end{equation}
\end{coro}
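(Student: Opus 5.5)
The statement to prove is Corollary \ref{coro:A15}: $\tau$ and $\tau^{-1}$, defined by (\ref{eq:CoCCnat}), are well-typed natural transformations $F_{st}\rTo^\centerdot T_e$ and $T_e\rTo^\centerdot S_{nd}$, and their composite is given by (\ref{eq:CoCCnat2}). I would get all of this from standard whiskering and vertical composition, using the ingredients already established. These are Proposition \ref{prop:A2} for $\sigma$ and $\sigma^{-1}$ at level $\textbf{C}_2=\textbf{C}\downarrow\textbf{C}$, and Proposition \ref{def:symCat} for the natural isomorphism $\varphi:T_e\circ\blacktriangle\simeq I_\textbf{C}$ with inverse $\varphi^{-1}$.

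\emph{Typing of the composites.} Whiskering $\varphi^{-1}:I_\textbf{C}\rTo^\centerdot T_e\blacktriangle$ on the right by $F_{st}$ gives
\[\varphi^{-1}F_{st}:F_{st}\rTo^\centerdot T_e\blacktriangle F_{st}.\]
Whiskering $\sigma:\blacktriangle F_{st}\rTo^\centerdot Id_{\textbf{C}\downarrow\textbf{C}}$ on the left by $T_e$ gives
\[T_e\sigma:T_e\blacktriangle F_{st}\rTo^\centerdot T_e.\]
The codomain of the first equals the domain of the second, so the vertical composite $\tau$ is a natural transformation $F_{st}\rTo^\centerdot T_e$. Dually, $T_e\sigma^{-1}:T_e\rTo^\centerdot T_e\blacktriangle S_{nd}$ and $\varphi S_{nd}:T_e\blacktriangle S_{nd}\rTo^\centerdot S_{nd}$ compose to $\tau^{-1}:T_e\rTo^\centerdot S_{nd}$. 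Naturality needs no separate check, since whiskerings and vertical composites of natural transformations are natural. As a sanity check on a component, for $J(f)$ with $f:a\rightarrow b$, (\ref{eq:sigmaNatTransf11}) gives
\[\tau(J(f))=T_e^1(id_a;f)\circ\varphi^{-1}(a):a\rightarrow \widetilde f.\]

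\emph{Composite formula.} Associativity of vertical composition gives
\[\tau^{-1}\bullet\tau=(\varphi S_{nd})\bullet\big((T_e\sigma^{-1})\bullet(T_e\sigma)\big)\bullet(\varphi^{-1}F_{st}).\]
Left whiskering by a functor preserves vertical composition, since $T_e$ preserves composition of component arrows, so $(T_e\sigma^{-1})\bullet(T_e\sigma)=T_e(\sigma^{-1}\bullet\sigma)$. Proposition \ref{prop:A2} gives $\sigma^{-1}\bullet\sigma=(\psi;\psi)$, which yields (\ref{eq:CoCCnat2}). The diagram (\ref{eq:tauVcomp}) just displays these two composable 2-cells.

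\emph{Main obstacle.} The delicate step is that $(\psi;\psi)$ must be read correctly as a natural transformation $\blacktriangle F_{st}\rTo^\centerdot\blacktriangle S_{nd}$ with components $(f;f):J(id_a)\rightarrow J(id_b)$. This is exactly the computation after Proposition \ref{prop:A2}. With that reading, $T_e(\psi;\psi)$ has component $T_e^1(f;f):B_T(id_a)\rightarrow B_T(id_b)$, and the right-hand side of (\ref{eq:CoCCnat2}) at $J(f)$ becomes
\[\varphi(b)\circ T_e^1(f;f)\circ\varphi^{-1}(a).\]
This is the arrow (\ref{eq:ClosedIso8}), and naturality of $\varphi$ forces it to equal $f=\psi_{J(f)}$. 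I would also check that Proposition \ref{prop:A2}, stated for $n\geq2$, applies with $\textbf{C}_1=\textbf{C}$ as base, so that $\sigma$ lives on $\textbf{C}_2$ as the corollary requires.
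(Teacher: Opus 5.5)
Your argument is correct and matches what the paper does, since the paper gives no separate proof and only records the whiskered definitions (\ref{eq:CoCCnat}) and the ``and hence'' step: $\tau$ and $\tau^{-1}$ are typed by whiskering $\varphi^{\pm 1}$ and $\sigma^{\pm 1}$ (with the target of $\sigma$ read as $Id_{\textbf{C}\downarrow\textbf{C}}$, as you do), and (\ref{eq:CoCCnat2}) follows from $(T_e\sigma^{-1})\bullet(T_e\sigma)=T_e(\sigma^{-1}\bullet\sigma)$ together with $\sigma^{-1}\bullet\sigma=(\psi;\psi)$ from Proposition~\ref{prop:A2}. Your closing remark is also valid, though not needed for the statement: naturality of $\varphi$ gives $\varphi(b)\circ T_e^1(f;f)=f\circ\varphi(a)$, so $\tau^{-1}\bullet\tau=\psi$ already holds in every CoCC as axiomatized, which means the condition the paper later imposes to single out symmetry-extended categories is in fact automatic.
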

Consequently, if $\textbf{C}$ is a CoCC,  Thus, for each arrow $f:a\rightarrow b$ in $\textbf{C}$, we have also these two arrows  in $\textbf{C}$ for its conceptualized object $\widetilde{f}$:
\begin{multline} \label{eq:tauS}
\tau(J(f))= T_e^1(id_a;f)\circ \varphi^{-1}(a):a\rightarrow \widetilde{f} \\ \tau^{-1}(J(f))=\varphi(b)\circ T_e^1(f;id_a):\widetilde{f}\rightarrow b~~~~~~~~~~~~~~~~~~~~~~~~~~~~~~~~~~~~~~~~~~~~~~~~~~~~~~~~~~~
\end{multline}
It is shown \cite{Majk98,Majk23} that if $\textbf{C}$ is CoCC, then all its higher dimensional levels $\textbf{C}_2, n \geq 2$, are CoCC.

Moreover, if there is also the natural transformation $\varrho:T_e\rTo^\centerdot F_{st}$, then there is also the  adjunction $(\blacktriangle, T_e,\varepsilon_e,\eta_e)$ where the unit is natural isomorphism

$\eta_e = \varphi^{-1}:I_\textbf{C}\rTo^\centerdot T_e \circ \blacktriangle$ and counit $\varepsilon_e: \blacktriangle\circ T_e \rTo^\centerdot I_{\textbf{C}_2}$\\ is defined as a pair of natural transformations
$\varepsilon_e =_{def} (\varrho, ~\psi\bullet\varrho)$. It is shown \cite{Majk23} that in this specific case we can interpret the conceptualized objects in $\textbf{C}$ as a particular kind of \emph{limits}.
\begin{example}: $\textbf{Set}$ category, where each arrow $f:a\rightarrow b$ is a functions, is an example of CoCC in which a conceptualized object is binary relation $\widetilde{f} =B_T(f) = T_e^0(J(f)) =_{def} \{(x,f(x) | x\in a\}$, that is, the graph of the function $f$. \\
In fact,
  an arrow $(k_1;k_2):J(f)\rightarrow J(g)$ in $\textbf{Set}\downarrow\textbf{Set}$ can be represented by the following commutative diagram in $\textbf{Set}$:
\begin{equation}\label{eq:set1}
 \begin{diagram}
\widetilde{f}=T_e^0(J(f))&&J(f)&& a      &     &\rTo^{f}   &    & b   \\
&&&& & & &  &    \\
\dTo^{k=}_{T_e^1(k_1;k_2)}&&\dTo_{(k_1;k_2)}&& \dTo_{k_1} &&  in ~\textbf{Set}  && \dTo_{k_2} \\
&&&& &           &      & &\\
\widetilde{g}=T_e^0(J(g))&&J(g)&&c  &      &  \rTo^{g} &      & d   \\
\textbf{Set}&\lTo_{T_e}&\textbf{Set}\downarrow\textbf{Set}&&       & & &  &
\end{diagram}
\end{equation}
where the function $k =T_e(k_1;k_2):\widetilde{f}\rightarrow \widetilde{g}$, we have that of the any element (a pair), $(x,f(x)) \in \widetilde{f}$, this function $k$ is defined by
\begin{equation} \label{eq:TeSet}
k(x,f(x)) =_{def}~ (k_1(x),k_2(f(x))) = (k_1(x), (k_2\circ f)(x)) ~~~\in \widetilde{g}
\end{equation}
It is easy to show that the functor $T_e:\textbf{Set}\downarrow\textbf{Set}\rightarrow \textbf{Set}$ is well defined:

1. For each identity arrow $(id_a;id_b):J(f)\rightarrow J(f)$ in $\textbf{Set}\downarrow\textbf{Set}$, the arrow $k= T_e(id_a;id_b)$ from definition (\ref{eq:TeSet}), we obtain that $k$ is the identity function $id_{\widetilde{f}}$ of the conceptualized object $\widetilde{f} = T_e(J(f))$.

2. For the composition of the arrows $(k_1;k_2):J(f)\rightarrow J(g)$ and $(h_1;h_2):J(g)\rightarrow J(m)$, by using definition in (\ref{eq:TeSet}), it holds the functorial property $T_e(h_1\circ k_1; h_2\circ k_2) = T_e((h_1;h_2)\circ (k_1;k_2)) = T_e(h_1;h_2)\circ T_e(k_1;k_2)$.\\
\textbf{Remark:} in the case of commutative diagram in (\ref{eq:set1}) when $g=f:a\rightarrow b$, $k_1 =id_a$ and $k_2 = id_b$, we obtain from (\ref{eq:TeSet}), for each pair $(x.x) \in \widetilde{id_a}$,

$k(x,x))=k(x,id_a(x)) = (f(x),f(id_a(x))) = (f(x), f(x))$

$=(f,f)(x,x)$
\\\\
Thus, $T_e^1(f:f) = (f,f)$, where $(f,f):\widetilde{id_a}\rightarrow \widetilde{id_b}$ is the pair of functions $f$.
Consequently, the derived by the functor $T_e$ the arrow (\ref{eq:ClosedIso8}) in $\textbf{Set}$ \emph{is equal} to $f$, i.e.,
\begin{equation} \label{eq:ClosedIso9}
\varphi(b) \circ T_e^1(f;f)\circ \varphi^{-1}(a) = f
\end{equation}
$\square$\\
 In \textbf{Set} the adjunction $(\blacktriangle, T_e,\varepsilon_e,\eta_e)$ is valid, because there exists the natural transformation $\varrho:T_e\rTo^\centerdot F_{st}$  for which each its arrow component  is equal to the first projection function $\pi_1:A\times B\rightarrow A$.  Thus,  for any two arrows $f:a\rightarrow b$ and $g:c\rightarrow d$, with the commutativity $g\circ k_1 =k_2\circ f$ corresponding to the arrow $(k_1; k_2):J(f) \rightarrow J(g)$ in $\textbf{Set}\downarrow\textbf{Set}$,  we obtain the following commutative diagram in $\textbf{Set}$ (the extended version of the left hand part of the diagram in (\ref{eq:set1})),
\begin{diagram}
J(f)&&&&(x,f(x))\in \widetilde{f}  &&&\rTo^{\varrho(J(f))=\pi_1}&&&  a \ni x\\
\dTo_{(k_1;k_2)} &&T_e \mapsto&&\dTo^{k=}_{T_e^1(k_1;k_2)}&&&in~ \textbf{Set}&&&\dTo_{k_1}    \\
J(g)&&&&(k_1(x),k_2f(x))\in \widetilde{g} &&&\rTo^{\varrho(J(g))=\pi_1}&&&  c\ni k_1(x)
\end{diagram}
Based on the definition in (\ref{eq:TeSet}), and $k_2f(x) =gk_1(x)$.
\\$\square$
\end{example}
\textbf{2. Symmetry-extended Categories (SEC)}:\\
This set of categories is a strict subset of CoCC, thus it is closed by the functor $T_e$ as well. So, this symmetry-closure functor $T_e:\textbf{C}\downarrow\textbf{C}\rightarrow\textbf{C}$ generates the conceptualized objects of $\textbf{C}$ from its arrows, with two particular natural transformations represented by the diagram (\ref{eq:tauVcomp}), with their vertical composition in (\ref{eq:CoCCnat2}).

Thus, for each object $J(f)$ of $\textbf{C}\downarrow\textbf{C}$, obtained from the arrow $f:a\rightarrow b$ of $\textbf{C}$, we obtain that

$(\tau^{-1}\bullet\tau) (J(f))= ((\varphi S_{nd})\bullet (T_e(\psi;\psi))\bullet (\varphi^{-1}F_{st}))(J(f))$

$ = (\varphi S_{nd})(J(f))\circ (T_e(\psi;\psi))(J(f))\circ (\varphi^{-1}F_{st})(J(f))$

$ = \varphi(b)\circ T_e^1(f;f) \circ \varphi^{-1}(a)$\\
where $\varphi(a):\widetilde{id_b}\rightarrow b$ and $\varphi^{-1}(a):a\rightarrow \widetilde{id_a}$ are two isomorphic arrows in $\textbf{C}$, (heaving their inverse arrows as well), so that the equation above can be represented by a commutative diagram in $\textbf{C}$ corresponding to an isomorphic arrow in $\textbf{C}\downarrow\textbf{C}$ as follows:
\begin{diagram}
\widetilde{id_a}= T_e^0(J(id_a))&&\rTo^{T_e^1(f;f)}&& \widetilde{id_b}= T_e^0(J(id_b)) && & J(T_e^1(f;f))   \\
 \dTo^{\varphi(a)}&&&&\dTo^{\varphi(b)} &\Leftrightarrow~& & \dTo^{is}_{=(\varphi(a);\varphi(b))} \\
a &&\rTo^{(\tau^{-1}\bullet\tau) (J(f))} && b&&& J((\tau^{-1}\bullet\tau) (J(f)))      \\
& in ~\textbf{C} & &&&&&  in~\textbf{C}\downarrow\textbf{C}~
\end{diagram}
where $is:J(T_e^1(f;f))\rightarrow J((\tau^{-1}\bullet\tau) (J(f)))$ is an isomorphic arrow in  $\textbf{C}\downarrow\textbf{C}$, so that, from Definition \ref{def:newIsomorph}, we have the following two arrows "equal up to isomorphism" in $\textbf{C}$ (from Definition \ref{def:newIsomorph}) :
\begin{equation}\label{eq:eqUpIso}
T_e^1(f;f)~\cong ~(\tau^{-1}\bullet\tau)(J(f))
\end{equation}
So, the specific interesting case of this general result is when $(\tau^{-1}\bullet\tau)(J(f)) = f$, so that $T_e^1(f;f)$ is equal up to isomorphism to the arrow $f$. \\
So we define the symmetry-extended category as a subset of the CoCC that satisfy this specific property for each arrow of this category:
\begin{definition}  \label{def:symCatExtend} \cite{Majk98}
A \textsl{symmetry-extended} category $\textbf{C}$  is CoCC, such that each its isomorphism $T_e^1(f;f)~\cong ~f$  is just an equality $T_e^1(f;f)~= ~f$ of these arrows , that is, when holds that
\begin{equation} \label{eq:ExCoCC}
   ~\tau^{-1}\bullet \tau = \psi:F_{st} \rTo^\centerdot S_{nd}
\end{equation}
for  vertical  composition of natural transformations $\tau: F_{st} \rTo^\centerdot
T_e ~$ and $ \tau^{-1}:T_e \rTo^\centerdot S_{nd}$ defined by (\ref{eq:CoCCnat}).
\end{definition}
So, in such symmetry-extended categories we have that for each arrow $f:a\rightarrow b$ the closure functor $T_e$ satisfies
\begin{equation} \label{eq:ClosedIsoMor}
 ~T_e^1(f;f)= \varphi^{-1}(cod(f))\circ f \circ \varphi(dom(f))
\end{equation}
So, in a symmetry-extended category $\textbf{C}$, for each arrow $(h;k):J(f)\rightarrow J(g)$ (an equation $k\circ f = g\circ h$) in $\textbf{C}\downarrow\textbf{C}$, we obtain the following commutative diagram in $\textbf{C}$:
\begin{equation}\label{fig:ExtenDiag}
\begin{diagram}
J(f)&&  a      &           &\rTo^{f}        &        & b   \\
 &&& \rdTo_{\tau_{J(f)}}& & \ruTo_{\tau^{-1}_{J(f)}} &   \\
 \dTo{(h;k)}&&\dTo^{h}   &    &   \widetilde{f} = T_e^0(J(f)) &  &\dTo_{k}\\
 &&&           &\dTo_{ T_e^1(h;k)}       & &\\
J(g)&&c      &           \rTo^{g}    &         &                    & d   \\
       &&& \rdTo_{\tau_{J(g)}}& & \ruTo_{\tau^{-1}_{J(g)}} &    \\
    &&&        &   \widetilde{g} = T_e^0(J(g)) &  &  \\
\end{diagram}
\end{equation}
which represents the \emph{strong correlation} between an arrow and its conceptualized object: each arrow is equal to composition of two arrow with their "middle" object equal to the conceptulized object of this original arrow, that is,
\begin{diagram}
 &&   &    &   \widetilde{f}  &  &&&\\
 & && \ruTo_{\tau_{J(f)}}& & \rdTo_{\tau^{-1}_{J(f)}} && &  \\
 && a      &           &\rTo^{f}        &        & b  && \\
&\ldTo^{\tau_{J(h)}} && & &  && \rdTo^{\tau_{J(k)}}&  \\
 \widetilde{h}&&\dTo^{h}   &    &     &  &\dTo_{k}&&\widetilde{k}\\
&\rdTo_{\tau^{-1}_{J(h)}} &&           &       & & &\ldTo_{\tau^{-1}_{J(k)}}&\\
&&c      &           \rTo^{g}    &         &                    & d   \\
&&       & \rdTo_{\tau_{J(g)}}& & \ruTo_{\tau^{-1}_{J(g)}} &    \\
&&   &        &   \widetilde{g}  &  &  \\
\end{diagram}
and this decomposition can inductively continue for the arrow components  of two natural transformations $\tau$ and $\tau^{-1}$ as well.

In the diagram above, if the objects $a,b,c$ and $d$ belong to an n-dimensional level then the objects $J(f), J(g),J(h), J(k)$ and $J(k\circ f)$ belong to next higher (n+1)-dimensional level, and hence $\widetilde{f},\widetilde{g},\widetilde{h},\widetilde{k}$ and $\widetilde{k\circ f} = \widetilde{k}*\widetilde{f}$ are derived objects hrom this higher level. Consequently, the components of the natural transformation $\tau$ we nominate as "ascending" arrows while the components of the natural transformation $\tau^{-1}$ we nominate as "descending" arrows of the n-dimensional level. So, each symmetry-extended category we can separate in a number of subcategories composed by the objects of the same level of conceptualization, beginning from the first level of basic (non-conceptualized) objects of such a category.

Let us now consider the conceptualized objects obtained from a composed arrow $g\circ f$ with conceptualized composed object

$\widetilde{g\circ f} = T_e^0(J(g\circ f))$

$= B_\top\psi(J(g\circ f))$

$= B_\top(g\circ f)$ ~~~~~~~~~~~~~~~~from identity $\psi J$

$= B_\top(g)* B_\top(f)$ ~~~~~~~~~~~~~~~~from (\ref{eq:SymHomomorph})

$ = T_e^0(J(g))* T_e^0(J(f))$

$= \widetilde{g} * \widetilde{f}$\\
 and for that consider the diagram (\ref{fig:ExtenDiag}) when $k=g$ and $h = f$, so from it we obtain the following commutative diagram
\begin{diagram}
  a      &   \rTo^{\tau_{J(f)}}   &\widetilde{f} &\rTo^{\tau^{-1}_{J(f)}}   & b \\
& ~~~~\rdTo_{\tau_{J(g\circ f)}}& \dTo_{T_e^1(id_a;g)} &  &  \\
 \dTo^{f}   &    &   \widetilde{g}*\widetilde{f}  &  &\dTo_{g}\\
&           &  \dTo^{T_e^1(f;id_c)}     & \rdTo^{\tau^{-1}_{J(g\circ f)}}& \\
 c      &   \rTo_{\tau_{J(g)}}   &\widetilde{g} &\rTo_{\tau^{-1}_{J(g)}}   & d \\
\end{diagram}
where external square diagram is trivial commutative diagram $g\circ f = g\circ f$.

An example of symmetry-extended categories is the category $\textbf{Set}$ as well. In fact, for each arrow $f:a \rightarrow b$ in $T_e^1(f;f)~\cong ~f$, and an element $x \in a$, we have that

$~((\tau^{-1}\bullet \tau)(J(f)))(x) = (\tau^{-1}(J(f))) \tau(J(f))(x)$

$ = (\tau^{-1}(J(f))) (T_e^1(id_a;f)\circ \varphi^{-1}(a))(x) $ from (\ref{eq:tauS})

$ = (\tau^{-1}(J(f))) (T_e^1(id_a;f))(x,x)$

$= \tau^{-1}(J(f))(x, f(x))$

$= (\varphi(b)\circ T_e^1(f;id_a))(x, f(x))$ from (\ref{eq:tauS})

$= \varphi(b)(f(x),f(x)) = f(x)$

$ = (\psi(J(f)))(x)$, \\
so that we obtain $\tau^{-1}\bullet \tau) =\psi$, that is, $(\tau^{-1}\bullet \tau)(J(f)) = f $.\\
Another examples \cite{Majk23} of symmetry-extended category are the category of relations $\textbf{Rel}$, Natural Deduction Category  $\textbf{ND}$ for constructive interpretation of logic implication, the category $\textbf{IC}$ for the propositional intuitionistic calculus and the category of finite labelled trees and the CCC with fixed-point operators.
 A significant example of such  categorial symmetry, applied to database mapping, is the \textbf{DB} category,  developed in \cite{Majk14}.

 It was shown \cite{Majk23} that for the n-dimensional transformations this symmetry-extended property is an invariant property valid globally in all n-dimensional levels.

 This section, dedicated to the internal categorial symmetry of its primitive categorial concepts, arrows and objects, and to the classification (hierarchy) of different kinds of these symmetries, we will conclude with the extreme limit case of the symmetry-extended categories analyzed in previous section. However, this limit case is important only from theoretical point of view, because we can not impose more constraints than that used for this limit case, in order to obtain another subclass of symmetric categories.

 From  the more concrete point of view, this limit case of symmetric categories is practically insignificant, as we will show, because such categories have the very poor structural properties.

 Just because of that, before a specification of this bottom level in the category symmetry hierarchy, as an important  conclusion about internal categorial symmetry and its classification, it is useful to consider the symmetry-extended categories as the most expressive level of category symmetry, with  most constraints that preserves its symmetry property, but  still with great capability to represent a lot of fundamental mathematical structures.

In fact, we have shown  that also the $\textbf{Set}$ category is a symmetry-extended category, and this category is fundamental for the denotational semantics in a significant number of mathematical theories.
 \\
 \\
 \textbf{3. Imploded Categories (IMC)}:\\
 Now we can define the limit case (with the most constraints) of the internal categorial symmetry and, successively, to provide the final hierarchy of the classes of categories with internal symmetry, and their relationship with n-dimensional levels as well.

As we will show, the extreme case of internal categorial symmetry  corresponds to the equivalence between all n-dimensional levels and basic category $\textbf{C}$. In this extreme case, basic category is denominated as "imploded category" from the fact that all n-dimensional levels are flattened (and contained) inside this basic category.

Formal specification of this extreme case of internal categorial symmetry is provided by the following definition: \index{imploded categories}
\begin{definition}\label{def:imploded} \cite{Majk98}
An imploded category $\textbf{C}$ is a CoCC (Definition \ref{def:symCatExtend}) for which there exists the adjoint equivalence $(T_e, \blacktriangle, \varphi,\eta)$ of the closure functor $T_e:\textbf{C}\downarrow\textbf{C}\rightarrow\textbf{C}$, 
and its right adjoint diagonal functor $\blacktriangle:\textbf{C}\rightarrow \textbf{C}\downarrow\textbf{C}$.
\end{definition}
So, in this definition of adjunction, the counit is a natural isomorphism $\varphi$ defined in (\ref{eq:ClosedIso}), while the unit is new natural isomorphism $\eta:Id_{\textbf{C}\downarrow\textbf{C}} \rightarrow \blacktriangle \circ T_e$, with the following adjunction diagram for each $J(f)$ of a given arrow $f:a\rightarrow b$ in $\textbf{C}$  and object $\textbf{d}$ of $C$,
\begin{equation} \label{fig:adjImploded}
\begin{diagram}
 J(f)~  &\rTo^{\eta_{J(f)}} &~ J(id_{\widetilde{f}})=\blacktriangle(\widetilde{f})    &&& &\widetilde{f} = T_e(J(f))~~~~~~~~~~~~~ &\rTo^{T_e(h)~~~~~}&~~~~~\widetilde{id_d}=T_e\blacktriangle(d) \\
 & \rdTo_{h=(h_1;h_2)}&     \dTo_{(\underline{f};\underline{f})=\blacktriangle(\underline{f})} &&&&\dTo^{\underline{f}}&\ldTo_{\varphi_d} &     \\
 &    &  J(id_d)= \blacktriangle(d)   &&&&d\\
            &  \textbf{C}\downarrow\textbf{C} &&&\lTo^{T_e}&&& \textbf{C}
\end{diagram}
\end{equation}
Thus, for any object $J(f)$ of $\textbf{C}\downarrow\textbf{C}$, the pair $(\widetilde{f},\eta_{J(f)})$ is an universal arrow, so that for any other pair $(d,h)$ there exist a unique arrow $\underline{f}$ from $\widetilde{f}$ to $d$ in $\textbf{C}$.

Note that the isomorphic arrow (an arrow component of the natural isomorphism $\eta$), $(is_1;is_2) = \eta_{J(f)}:J(f) \rightarrow J(id_{\widetilde{f}})$ of $\textbf{C}\downarrow\textbf{C}$ is composed by two isomorphic arrows
\begin{equation}\label{eq:adjImploded1}
is_1:a \simeq \widetilde{f}  ~~~~\emph{and}~~~~ is_2:b \simeq \widetilde{f}
\end{equation}
that compose the following commutative diagram:
\begin{equation} \label{fig:adjImploded2}
\begin{diagram}
a= dom(f)&&\rTo^{f}&& b=cod(f) && & J(f)   \\
 \dTo^{is_1}&&&&\dTo^{is_2} &\Leftrightarrow~& & \dTo^{\eta_{J(f)}}_{=(is_1;is_2)} \\
\widetilde{f} &&\rTo^{id_{\widetilde{f}}} && \widetilde{f}&&& J(id_{\widetilde{f}})=\blacktriangle^0 T_e^0(J(f))      \\
& in ~\textbf{C} & &&&&&  in~\textbf{C}\downarrow\textbf{C}~
\end{diagram}
\end{equation}
Hence, from the commutative diagram with two isomorphic arrows $is_1$ and $is_2$, we obtain that
\begin{equation}\label{eq:adjImploded2}
f = is_2^{-1}\circ is_1
\end{equation}
that is, the arrow $f$ must be an isomorphism, $f:a\simeq b$, as well.  That is, the commutative diagram in (\ref{fig:adjImploded2}) represents the isomorphisms
\begin{equation}\label{eq:adjImploded3}
dom(f) \simeq \widetilde{f} \simeq cod(f)
\end{equation}
for every arrow of $\textbf{C}$. From the fact that it holds for every object $J(f)$ of $\textbf{C}\downarrow\textbf{C}$, we have that the arrows of $\textbf{C}$, different from the identity arrows, \emph{must be isomorphic arrows}.

It was shown \cite{Majk23} that each Imploded Category is a Symmetry-extended Category and all its n-dimensional levels, for $n\geq 2$,  are imploded categories as well and hence this conceptual symmetry is globally valid.\\
\textbf{Remark}: In order to understand immediately how the structural properties of the limit case of Imploded Categories are poor, consider that, from the fact that they are also symmetry-extended categories, all arrows of the commutative diagram (\ref{fig:ExtenDiagB}) are the isomorphisms !  Consequently, the "minimal"  imploded categories are the most simple (structurally) \emph{discrete categories}. In fact, the skeleton of every imploded category is just a discrete category, where there is exactly the \emph{bijection} between the arrows and the objects of a category.
\\$\square$\\
 We recall that all n-dimensional levels $\textbf{C}_n$, for $n\geq 2$, of a given imploded category $\textbf{C}$ are equivalent to this base category. This fact explains why the imploded categories represent the last level in the hierarchy of internal symmetry  of the categories, and we can not obtain another subclass of the conceptually symmetric categories by addition of the new constraints.

 So, finally, we obtained the following ISA (inclusive) hierarchy between different classes of conceptually symmetric categories between their primitive concepts (objects and arrows):\\\\
 \begin{tabular}{|c|c|c|}
   \hline
   Symmetry & Subclass & Added operations/constraints \\
   \hline
    & &\\
   No & Categories & no constraints \\
   &&\\
   \hline
   & & Operation $B_\top$ from arrows into objects, with\\
   Yes & Perfectly symmetric & $B_\top (g\circ f) = B_\top(g)*B_\top(f)$, $~~~B_\top(id_a) \simeq a$\\
   &&\\
   \hline
   & &\\
   Yes & Conceptually closed (CoCC) & Closure $T_e:\textbf{C}\downarrow\textbf{C}\rightarrow\textbf{C}$, with $~T_e^0= B_\top\psi$ \\
   &&\\
   \hline
   & &\\
   Yes & Symmetry-extended & $\tau^{-1}\bullet\tau = \psi$ \\
   &&\\
   \hline
   & &\\
   Yes & Imploded  & $\blacktriangle\circ T_e  \simeq I_{\textbf{C}\downarrow\textbf{C}}$\\
   &&\\
   \hline
 \end{tabular}
 \\
 \\$\square$ \\
\section{Internal Symmetry Group of Transformations}
In previous sections we introduced a kind of \emph{internal symmetry} based on the invariant transformation of the arrows into the "conceptualized" objects. This invariance means that in the categories able for such transformations, this transformation does modify them. Each object obtained as a transformation of some arrow of the category is "equivalent" to this arrow (in an analog way in which each object of a category can be represented by its identity arrow), and the composition of the arrows in this category can be equivalently represented by analog composition of their conceptualized objects. Thus, we consider the categories for which there is this symmetry between primitive categorial concepts: arrows and objects.

Symmetries play a fundamental role in physics because they are
related to conservation laws. This is stated in Noether's theorem
which says that invariance of the action under a symmetry
transformation implies the existence of a conserved quantity.

Albert Einstein once said, about "\emph{the world of our
sense experiences}", and "\emph{the fact that it is comprehensible is a miracle}" (1936, p. 351). A few decades later, another physicist,

Eugene Wigner, wondered about the unreasonable effectiveness of mathematics in the
natural sciences, concluding that "\emph{the miracle of the appropriateness of the language of mathematics for the formulation of the laws of physics is a wonderful gift which we neither understand nor deserve}" (1960, p. 14).
\\ At least three
factors are involved in Einstein’s and Wigner’s miracles: the
physical world, mathematics, and human cognition.
\begin{example} \textbf{Internal symmetries}: \\
In Physics we have also a kind of symmetry, different from the translations/rotations in time-space, as for example in Quantum Physics for a particle represented mathematically by a  complex scalar de Broglie filed $\Psi =
\Phi(t,\overrightarrow{\textbf{r}})\mathrm{e}^{-i\varphi_T}$ representing the wave-packet of a free spin-zero neutral particle, where
$\Phi(t,\overrightarrow{\textbf{r}})$ is a real module of this field and $\varphi_T$ is the De Broglie particle's phase. \index{Internal symmetries}

For the Lie group $G$ of continuous transformations of the
field $\Psi$, we  use the most general one: \index{group
unitary $U(1)$} circle group (unitary group) of transformations, for a given real values $\alpha$,
$U(1) = \{\mathrm{e}^{-i\alpha} | \alpha \in \mathbb{R}\}$, with
unit $1$ (for $\alpha = 0$), multiplication as the group operation,
and for each element $\mathrm{e}^{-i\alpha} \in U(1)$ its inverse
element $\overline{\mathrm{e}^{-i\alpha}} = \mathrm{e}^{i\alpha}$.
From the  standard Quantum Mechanics, two kets $|\Psi\rangle$ and
$|\mathrm{e}^{-i\alpha}\Psi\rangle$ represent the same particle (it is changed only De Broglie particle's phase $\varphi_T$ by the constant value $\alpha$), so
that the particle remains invariant
  by this general group $U(1)$ of transformations.

This is an \emph{internal} transformation because
it transforms the field $\Psi$ into each other
in a way without making reference on time and space. In effect, in \cite{Majk17q},  where $\Phi(t,\overrightarrow{\textbf{r}})$ is considered as the square root of the energy-density of a given massive particle (fermion) localized for each time-instance $t$ in a small 3D volume $V$, for a given Lagrangian density $\L_{free}$ of a free particle  propagating by constant speed $\overrightarrow{\textbf{v}}$ in the vacuum,  that the Noether charge density $J_0$ is equal to the energy-density $\Phi^2$ which remains invariant during such a constant-speed propagation in time-space while the Noether current $\overrightarrow{J}$ is just equal to the constant energy-density flux $\Phi^2\overrightarrow{\textbf{v}}$.
\\$\square$
\end{example}
By considering an analogy with the internal symmetry group in physics $U(1)$, in this section we will provide a formal definition of internal category-symmetry group $ICS(\mathbb{Z})$ of categorial transformations for a more general case of internal symmetry for a perfectly symmetric category $\textbf{C}$ (with a given duality operator $B_T:Mor_\textbf{C} \rightarrow Ob_\textbf{C}$ in (\ref{dualop})  and the partial operation $*$ of its conceptualized (from arrows) objects).

First of all,
based on the isomorphism (\ref{eq:representatib}) between each object $a$ in a perfectly symmetric category $\textbf{C}$ and the conceptualized object obtained from its identity arrow, $\widetilde{id_a} = B_T(id_a)$, for each commutative diagram $f =h\circ g:a \rightarrow b$, we obtain from it another commutative diagram by using these isomorphisms,
\begin{equation} \label{eq:representatiV}
\begin{diagram}
&& \widetilde{id_c}& \lTo^{is_c^{-1}} & c  & \lTo^{is_c} & \widetilde{id_c}&&\\
&\ruTo^{g_1}&&\ruTo_{g}&&\rdTo_{h}&&\rdTo^{h_1}&\\
  \widetilde{id_a} &   \rTo^{is_a} & a &&  \rTo^{f}  && b  &\rTo^{is_b^{-1}}& \widetilde{id_b}\\
\end{diagram}
\end{equation}
so that the external commutative diagram (from the fact that the top composition of the arrows $is_c^{-1}\circ is_c$ is the identity arrow of the conceptualized object $\widetilde{id_c}$, and from the fact that $g_1 = is_c^{-1} \circ g\circ is_a$, $h_1 = is_b^{-1} \circ h\circ is_c$, and defining new arrow
\begin{equation} \label{eq:transfF}
f_1 =  is_b^{-1} \circ f \circ is_a
\end{equation}
 (in the bottom line of the diagram above), we obtain a following \emph{functorial transformation} of the commutative diagram $f = h\circ g:a\rightarrow b$ into derived commutative diagram $f_1 = h_1\circ g_1:\widetilde{id_a} \rightarrow \widetilde{id_b}$ between the conceptualized objects, as we easily show,

$h_1\circ g_1 = (is_b^{-1} \circ h\circ is_c)\circ (is_c^{-1} \circ g\circ is_a) =$

$=  is_b^{-1} \circ h \circ(is_c\circ is_c^{-1}) \circ g\circ is_a $

$=  is_b^{-1} \circ h \circ id_{\widetilde{id_c}} \circ g\circ is_a $

$=  is_b^{-1} \circ h \circ g\circ is_a $

$=  is_b^{-1} \circ f \circ is_a $

$= f_1$
\\
In what follows, we will denote by $\overrightarrow{\textbf{C}}$ the category derived from the category $\textbf{C}$ where each object of $\textbf{C}$, $a \in Ob_C$, is replaced by its identity arrow $id_a:a \rightarrow a$. From the fact that the arrows in $\overrightarrow{\textbf{C}}$ are equal to the arrows in $\textbf{C}$, their composition is equal as that in $\textbf{C}$, that is, given by the same partial operation $\circ$.

From the fact that in the category $\textbf{C}$ we can't have the composition of objects but only of the arrows given by the partial operation $\circ:Mor_\textbf{C}\times Mor_\textbf{C} \rightarrow Mor_\textbf{C}$, we need its derived category $\widetilde{\textbf{C}}$ where the composition of its arrow is given by partial operation $$*:Mor_{\widetilde{\textbf{C}}}\times Mor_{\widetilde{\textbf{C}}} \rightarrow Mor_{\widetilde{\textbf{C}}}$$
such that for each composition of two arrows $g\circ f$ in $\textbf{C}$, we have the composition of their conceptualized objects $\widetilde{f} =B_T(f)$ and $\widetilde{g} =B_T(g)$ written by $\widetilde{g}* \widetilde{f}$. Note that the objects of this derived category $\widetilde{\textbf{C}}$ are the conceptualized objects obtained from the identity arrows of $\textbf{C}$ while the arrows of $\widetilde{\textbf{C}}$ are the conceptualized objects obtained from all arrows in $\textbf{C}$. Moreover, $\widetilde{g}* \widetilde{f}$ is well defined composition of arrows if $dom(\widetilde{g}*) = cod(\widetilde{f}*)$ in $\widetilde{\textbf{C}}$, in according  to composition of the arrows for all categories.\\
So we provide this definition of this derived category \cite{Majk23} by:
\begin{lemma} \label{prop:ConceptTransf}
Let $\textbf{C}$ be  perfectly symmetric category (PSC) with duality operator $B_\top$ satisfying the properties (\ref{eq:SymHomomorph}) and (\ref{eq:representatib}).
Then we define the "conceptual lifting" operator which transforms $\textbf{C}$ into  category of conceptualized objects $\widetilde{\textbf{C}}$,  such that:
\begin{enumerate}
  \item Each arrow in $\widetilde{\textbf{C}}$ is a conceptualized object $\widetilde{f} = B_T(f)$ for $f\in Mor_\textbf{C}$. We denote by $*:Mor_{\widetilde{\textbf{C}}}\times Mor_{\widetilde{\textbf{C}}} \rightarrow Mor_{\widetilde{\textbf{C}}}$ the partial operation for composition of the arrows in $\widetilde{\textbf{C}}$.
  \item Each object in $\widetilde{\textbf{C}}$ is a conceptualized object $\widetilde{id_a} = B_T(id_a)$ for identity arrow $id_a\in Mor_\textbf{C}$, and hence $Ob_{\widetilde{\textbf{C}}}\subset Mor_{\widetilde{\textbf{C}}}$.
\end{enumerate}
Thus, there is a transformation functor $T=(T^0,T^1):\textbf{C}\rightarrow\widetilde{\textbf{C}}$ with the components
\begin{equation}\label{eq:Conc-tran}
T^0 \triangleq  B_\top\tau_I,   ~~~~~~~~~~~~~~T^1 \triangleq B_\top
\end{equation}
where $\tau_I:Id_\textbf{C}\rTo^\centerdot Id_\textbf{C}$ is the identity natural transformation for the category $\textbf{C}$.
\end{lemma}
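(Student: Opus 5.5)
The plan is to verify directly that $\widetilde{\textbf{C}}$, as described in items 1 and 2, is a category, and that the assignment (\ref{eq:Conc-tran}) satisfies the functor laws. Both verifications come from the homomorphism property (\ref{eq:SymHomomorph}) of $B_\top$ together with the representability isomorphism (\ref{eq:representatib}).

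\textbf{Step 1: structure of $\widetilde{\textbf{C}}$.} For an arrow $f:a\rightarrow b$ of $\textbf{C}$, I set
\[
dom(\widetilde{f})=\widetilde{id_a}, \qquad cod(\widetilde{f})=\widetilde{id_b} .
\]
For composable $f$ and $g$ in $\textbf{C}$, I define $\widetilde{g}*\widetilde{f}=B_\top(g\circ f)$, which is legitimate by (\ref{eq:SymHomomorph}).

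The identity on the object $\widetilde{id_a}$ is the arrow $\widetilde{id_a}$ itself; this is why $Ob_{\widetilde{\textbf{C}}}\subset Mor_{\widetilde{\textbf{C}}}$. The unit laws then follow from
\[
\widetilde{f}*\widetilde{id_a}=B_\top(f\circ id_a)=\widetilde{f}, \qquad \widetilde{id_b}*\widetilde{f}=B_\top(id_b\circ f)=\widetilde{f}.
\]
Associativity is transported in the same way:
\[
(\widetilde{h}*\widetilde{g})*\widetilde{f}=B_\top((h\circ g)\circ f)=B_\top(h\circ(g\circ f))=\widetilde{h}*(\widetilde{g}*\widetilde{f}).
\]

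\textbf{Step 2: the functor $T$.} On objects, $T^0(a)=B_\top(\tau_I(a))=B_\top(id_a)=\widetilde{id_a}$, since the component of the identity natural transformation at $a$ is $id_a$. On arrows, $T^1(f)=\widetilde{f}$, which has domain $T^0(a)$ and codomain $T^0(b)$ by the choice made in Step 1. Preservation of identities is $T^1(id_a)=\widetilde{id_a}=id_{T^0(a)}$. Preservation of composition is exactly (\ref{eq:SymHomomorph}):
\[
T^1(g\circ f)=\widetilde{g}*\widetilde{f}.
\]

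\textbf{Main obstacle.} The hard point is that $*$ must be well defined as a partial operation on $Mor_{\widetilde{\textbf{C}}}$. If $B_\top$ is not injective, the same object $\widetilde{f}$ may arise from two arrows, and the domain and codomain assignments, as well as the product $\widetilde{g}*\widetilde{f}$, could then depend on the choice of representatives.

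I would handle this in one of two ways. The first option is to read the arrows of $\widetilde{\textbf{C}}$ as tagged triples $(\widetilde{id_a},\widetilde{f},\widetilde{id_b})$, i.e.\ conceptualized objects recorded together with their source and target. The second is to note that, since the paper treats $B_\top$ as the operator $T_e^0J$ and $J$ is injective, distinct arrows yield distinct encapsulations, so $B_\top$ may be assumed injective on arrows. In either reading, $*$ is the transport of $\circ$ along $B_\top$, and the category axioms are inherited from $\textbf{C}$.

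Finally, I would remark that (\ref{eq:representatib}) makes $T^0$ agree with $Id$ up to the isomorphisms $is_a$. This links $\widetilde{\textbf{C}}$ to the transported diagrams (\ref{eq:representatiV}), where $f_1=is_b^{-1}\circ f\circ is_a$.
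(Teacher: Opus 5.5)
Your core argument is the paper's. The paper's proof is exactly your Step 2: $T^0(a)=B_\top(\tau_I(a))=\widetilde{id_a}$ is an identity of $\widetilde{\textbf{C}}$, and preservation of composition is literally (\ref{eq:SymHomomorph}). Your Step 1 (transporting the unit laws and associativity from $\circ$ to $*$) and your well-definedness discussion are extra care that the paper takes for granted.

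One correction: drop your second way of handling non-injectivity.
\begin{itemize}
\item Injectivity of $J$ gives no injectivity of $T_e^0J$.
\item A general PSC need not have a closure functor $T_e$ at all.
\item In the paper's own example $\textbf{Set}$, $B_\top$ is not injective. Both $id_{\{0\}}$ and the inclusion $\{0\}\hookrightarrow\{0,1\}$ have graph $\{(0,0)\}$. So the arrow obtained from the inclusion coincides with the object $\widetilde{id_{\{0\}}}$, and its codomain is ambiguous.
\end{itemize}

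Your first option, recording source and target, is the right fix. Two refinements:
\begin{itemize}
\item Since $*$ is already given as a partial operation in Definition \ref{def:PerfectSym}, the value $\widetilde{g}*\widetilde{f}$ depends only on $\widetilde{g}$ and $\widetilde{f}$. The genuine ambiguity is therefore only in the domain and codomain assignment.
\item For composability in $\widetilde{\textbf{C}}$ to match composability in $\textbf{C}$, you need $a\mapsto\widetilde{id_a}$ to be injective. This holds in $\textbf{Set}$, where $\widetilde{id_a}$ is the diagonal of $a$. If it fails, tag by the objects $a,b$ themselves rather than by $\widetilde{id_a},\widetilde{id_b}$. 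Otherwise two tagged arrows can match at $\widetilde{id_b}=\widetilde{id_{b'}}$ with $b\neq b'$ while no composite exists in $\textbf{C}$.
\end{itemize}
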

\textbf{Proof:} It is only necessary to show that $T$ satisfies the properties of a functor for identity arrows and for the composition of arrows:
1. For each object $a$ in $\textbf{C}$, $T^0(a) = \widetilde{id_a}$ which is an identity arrow in $\widetilde{\textbf{C}}$.\\
2. For any well defined composition of arrows $g\circ f$ in $\textbf{C}$ with $T^1(g) = B_T(g)$ and $T^1(f) = B_T(f)$, we have that

$T^1(g\circ f) = B_T(g\circ f)$

$= B_T(g)* B_T(f)$   because $\textbf{C}$ is a PSC, thus holds (\ref{eq:SymHomomorph})

$= T^1(g)*T^1(f)$
\\$\square$\\
In fact, it is well known that each category $\textbf{C}$ can be equivalently replaced by the category $\overrightarrow{\textbf{C}}$ composed by the arrows only in which the objects are represented by their identity arrows, and hence with the functor $M =(\tau_I,id):\textbf{C}\rightarrow \overrightarrow{\textbf{C}}$, where the component of this functor for the arrows is the identity function.
So, if $\textbf{C}$  is perfectly symmetric as well, then we can define also the category $\widetilde{\textbf{C}}$ composed by only conceptualized objects $\widetilde{f} = B_\top(f)$ for each arrow $f$ in $\textbf{C}$ as specified in this proposition, and with the duality functor $D = (B_\top,B_\top):\overrightarrow{\textbf{C}}\rightarrow \widetilde{\textbf{C}}$.

 Thus, by composition we obtain the functor $T = D\circ M:\textbf{C}\rightarrow \widetilde{\textbf{C}}$, so that we have the following correspondence between the conceptually symmetric category $\textbf{C}$ and its two derived dual categories (one for the arrows in $\textbf{C}$ and another for the conceptualized objects obtained from the arrows in $\textbf{C}$):
\begin{equation} \label{fig:dualsym}
\begin{diagram}
 id_a&\rTo^{f}& id_b   && a &\rTo^{f}& b  &&\widetilde{id}_a &\rTo^{\widetilde{f}}          & \widetilde{id}_b    && \\
 & \rdTo_{h = g\circ f}&     \dTo_{g}&&& \rdTo_{h = g\circ f}&     \dTo_{g}&&& \rdTo_{\widetilde{h}= \widetilde{g}*\widetilde{f}}& \dTo_{\widetilde{g}}&&    \\
 &&   id_c   &&&     &   c  &&&   &   \widetilde{id}_c   &&\\
 &\overrightarrow{\textbf{C}}&      &\lTo^M&&    \textbf{C} &    &\rTo^T&&   \widetilde{\textbf{C}}&   &&
\end{diagram}
\end{equation}
The conceptual symmetry of $\widetilde{\textbf{C}}$ is represented by the duality functor $D = (B_\top,B_\top):\overrightarrow{\textbf{C}}\rightarrow \widetilde{\textbf{C}}$, and let us demonstrate that it is just an isomorphism:
\begin{coro} Duality functor $D = (B_\top,B_\top):\overrightarrow{\textbf{C}}\rightarrow \widetilde{\textbf{C}}$ is an isomorphism between the arrow representation category $\overrightarrow{\textbf{C}}$ and the object representation  category $\widetilde{\textbf{C}}$ of a given perfectly symmetric category $\textbf{C}$.

Thus, internal categorial symmetry is represented by this categorial isomorphism $\overrightarrow{\textbf{C}} \simeq \widetilde{\textbf{C}}$ in $\textbf{Cat}$.
\end{coro}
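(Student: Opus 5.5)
To show that $D=(B_\top,B_\top)$ is an isomorphism in $\textbf{Cat}$, I will first check that $D$ is a functor and then build an explicit inverse functor $D^{-1}:\widetilde{\textbf{C}}\rightarrow\overrightarrow{\textbf{C}}$ with $D^{-1}\circ D = Id_{\overrightarrow{\textbf{C}}}$ and $D\circ D^{-1}=Id_{\widetilde{\textbf{C}}}$.

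Functoriality of $D$ comes straight from Lemma \ref{prop:ConceptTransf}. Write $T=D\circ M$, where $M=(\tau_I,id)$ is an isomorphism: it sends $a$ to $id_a$ and is the identity on arrows. Then $D = T\circ M^{-1}$. On objects $id_a$ of $\overrightarrow{\textbf{C}}$ it gives $D(id_a)=B_\top(id_a)=\widetilde{id_a}$. On arrows, $D(g\circ f)=B_\top(g)*B_\top(f)$ by the homomorphism property (\ref{eq:SymHomomorph}). Identities are preserved because the identity arrow of $\widetilde{id_a}$ in $\widetilde{\textbf{C}}$ is $\widetilde{id_a}$ itself, which is item 2 of the lemma.

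Surjectivity is immediate from the construction of $\widetilde{\textbf{C}}$. Its arrows are by definition exactly the elements $B_\top(f)$, $f\in Mor_\textbf{C}$, and its objects are exactly the $B_\top(id_a)$. So $D$ is surjective on both objects and arrows.

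The remaining point, and the main obstacle, is injectivity: if $B_\top(f)=B_\top(g)$, we need $f=g$. Definition \ref{def:PerfectSym} does not state this explicitly. I will argue that it is forced by the construction of $\widetilde{\textbf{C}}$, taking each arrow $\widetilde{f}$ of $\widetilde{\textbf{C}}$ to carry its typing $\widetilde{id_a}\rightarrow\widetilde{id_b}$ as in diagram (\ref{fig:dualsym}).

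On objects, the representability principle (\ref{eq:representatib}) gives $is_a:B_\top(id_a)\simeq a$. So $B_\top(id_a)=B_\top(id_{a'})$ yields $a\simeq a'$, and the object $\widetilde{id_a}$ determines $a$. For the rigorous identification I will use the labelled object $J(id_a)$: I take the objects of $\widetilde{\textbf{C}}$ to be indexed by the identities they come from, exactly as objects of $\overrightarrow{\textbf{C}}$ are the $id_a$.

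On arrows, the typing then recovers $dom(f)$ and $cod(f)$. If a concrete witness is wanted, for a CoCC one can recover $f$ as $(\tau^{-1}\bullet\tau)(J(f))$ via (\ref{eq:tauS}), in the symmetry-extended case via (\ref{eq:ExCoCC}). In the general PSC case I simply take $B_\top$ as the labelling bijection $f\mapsto\widetilde{f}$ defining $Mor_{\widetilde{\textbf{C}}}$, as in the $\textbf{Set}$ example, where the graph $\widetilde{f}$ determines $f$.

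With $D$ bijective, define $D^{-1}(\widetilde{f})=f$ and $D^{-1}(\widetilde{id_a})=id_a$. This inverse is a functor because $D$ is: $D^{-1}(\widetilde{g}*\widetilde{f})=D^{-1}(B_\top(g\circ f))=g\circ f$. Both composites are the identity functors, so $\overrightarrow{\textbf{C}}\simeq\widetilde{\textbf{C}}$ in $\textbf{Cat}$.
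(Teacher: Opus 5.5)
Your route is the paper's. Its proof simply defines $\overline{D}(\widetilde{id_a})=id_a$ and $\overline{D}(\widetilde{f})=f$, then checks that both composites with $D$ are identities; your functoriality checks for $D$ and $D^{-1}$ only add detail. You also correctly spot the crux, which the paper passes over: $\overline{D}$ is well defined only if $B_\top$ is injective on identities and on typed arrows, and Definition \ref{def:PerfectSym} does not say so.

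The gap is that your argument that injectivity is \emph{forced by the construction} fails at each step.
\begin{itemize}
\item On objects, (\ref{eq:representatib}) applied to $B_\top(id_a)=B_\top(id_{a'})$ yields only $a\simeq a'$, not $a=a'$.
\item Your CoCC witness $(\tau^{-1}\bullet\tau)(J(f))$ takes $J(f)$, i.e.\ $f$ itself, as input. So it cannot show that the object $\widetilde{f}$ determines $f$.
\item In a plain CoCC that witness equals $\varphi(b)\circ T_e^1(f;f)\circ\varphi^{-1}(a)$, which is $f$ only under (\ref{eq:ExCoCC}).
\item Even in the symmetry-extended case, two arrows $f,g:a\rightarrow b$ with $\widetilde{f}=\widetilde{g}$ can factor through the same middle object via different components $\tau_{J(f)},\tau_{J(g)}$. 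No contradiction arises.
\end{itemize}

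No argument can close this step, because the statement is false for PSCs as literally defined. Take a monoid $M$ with more than one element, viewed as a one-object category. Let $B_\top$ send every arrow to the unique object, with $*$ trivial; then (\ref{eq:SymHomomorph}) and (\ref{eq:representatib}) hold. With $T_e=F_{st}$ and $\varphi$ the identity, it is even a CoCC, with $\tau_{J(f)}$ the identity and $\tau^{-1}_{J(f)}=f$, hence symmetry-extended. Yet $\widetilde{\textbf{C}}$ is the terminal category, and $D$ sends all arrows of $M$ to one arrow, so it is not an isomorphism.

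So injectivity of $B_\top$ must be an explicit hypothesis. Equivalently, $\widetilde{\textbf{C}}$ must be built with arrows formally indexed by the typed arrows of $\textbf{C}$. Your final move, taking $B_\top$ as the \emph{labelling bijection}, is exactly this hypothesis, so state it as an assumption rather than a consequence. Under it, your proof and the paper's are both complete, but the isomorphism is then essentially a renaming.
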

\textbf{Proof}: Let us define the opposite functor $\overline{D} =(\overline{D}^0,\overline{D}^1):\widetilde{\textbf{C}}\rightarrow \overrightarrow{\textbf{C}}$ such that:

1.  for each object $\widetilde{id_a}\in Ob_{\widetilde{\textbf{C}}}$, $\overline{D}^0(\widetilde{id_a}) = id_a$, identity arrow of object $a$ in $\textbf{C}$.

2. for each arrow $\widetilde{f}\in Mor_{\widetilde{\textbf{C}}}$, $\overline{D}^1(\widetilde{f}) = f$, a non identity identity arrow in $\textbf{C}$.
\\\\Thus in $\textbf{Cat}$,

$\overline{D}\circ D = (\overline{D}^0B_T,\overline{D}^1B_T) = Id_{\overrightarrow{\textbf{C}}}$ is the identity endofunctor of $\overrightarrow{\textbf{C}}$, such that for any arrow $f$ in $\overrightarrow{\textbf{C}}$,
\begin{diagram}
  id_a  &&&&\widetilde{id}_a &&&& id_a     \\
  \dTo_{f}&&&&    \dTo_{\widetilde{f}}&&&& \dTo_{f}    \\
   id_b   &&&& \widetilde{id}_b   &&&& id_b\\
  \overrightarrow{\textbf{C}}&&\rTo^D&&    \widetilde{\textbf{C}} &    &\rTo^{\overline{D}}&&   \overrightarrow{\textbf{C}}
\end{diagram}
 and, analogously,

$D\circ\overline{D} = (B_T\overline{D}^0,B_T\overline{D}^1) = Id_{\widetilde{\textbf{C}}}$ is the identity endofunctor of $\widetilde{\textbf{C}}$,
\\
so that we obtain the following commutative diagram in $\textbf{Cat}$:
\begin{equation} \label{fig:universal-arrow40CEqual2}
\begin{diagram}
 \textbf{C}         &&  \\
 \dTo_{M}    &\rdTo^{T} &     \\
  \overrightarrow{\textbf{C}} &\pile{\rTo^{D}\\ \\\lTo_{\overline{D}}}& \widetilde{\textbf{C}}
\end{diagram}
\end{equation}
$\square$\\
In general all constructions of algebraic topology are functorial. In our case of the internal categorial symmetry of a given category $\textbf{C}$, the \emph{internal} transformation of this category have to be its endofunctor.  So, differently from the transformation  as functor $T:\textbf{C}\rightarrow \widetilde{\textbf{C}}$, used in  \cite{Majk23}, which can not generate the symmetry group, from the fact that we can not have the group composition $T \circ T$ because  domain and codomain of $T$ are different categories, we need, by using the functors $M$ and $T$ between the category $\textbf{C}$ and two derived from it categories (of only arrows and of only conceptualized objects), to define the new functors from them into $\textbf{C}$, in order that by composition of them we can obtain an endofunctor of $\textbf{C}$.

However, for such necessity, we can not use the functor $M:\textbf{C}\rightarrow \overrightarrow{\textbf{C}}$, because the arrows of $\overrightarrow{\textbf{C}}$ are equal to the arrows of $\textbf{C}$, and in order to satisfy the functorial property on composition of the arrows, each arrow in $\overrightarrow{\textbf{C}}$ must be mapped into \emph{the same} arrow in $\textbf{C}$. So, generally, the endofunctor has to be the identity endofunctor, which obviously is not any kind of transformation.\\
Consequently, we introduce a new functor for the category $\widetilde{\textbf{C}}$:
 \begin{definition} \label{def:intFunct}
 Let $\textbf{C}$ be a perfectly symmetric category with a given duality operator $B_T$. Then, by using transformation (\ref{eq:transfF}) of diagrams in (\ref{eq:representatiV}), we define the the functor $H = (H^0,H^1):\widetilde{\textbf{C}}\rightarrow\textbf{C}$ by:
 \begin{enumerate}
   \item for each object $\widetilde{id_a}$ of $\widetilde{\textbf{C}}$, $H^0(\widetilde{id_a}) =_{def} \widetilde{id_a}$, that is, $H^0:Ob_{\widetilde{\textbf{C}}}\rightarrow Ob_\textbf{C}$ is an injective function.
   \item for each arrow $\widetilde{f}$ ( conceptualized from the arrow $f:a\rightarrow b$ is not an identity arrow in $\textbf{C}$) of $\widetilde{\textbf{C}}$, $H^1(\widetilde{f}) =_{def} is_{cod(f)}^{-1}  \circ f \circ is_{dom(f)}$.
 \end{enumerate}
 Consequently, we define the endofunctor $E = H\circ T:\textbf{C}\rightarrow\textbf{C}$, so that for each object $a$, from (\ref{eq:Conc-tran}), $E^0(a) = H^0B_\top\tau_I(a) = \widetilde{id_a}$, and each arrow $f:a\rightarrow b$, $E^1(f) = H^1B_\top(f) = is_{b}^{-1}  \circ f \circ is_{a}$.
 \end{definition}
 Let us show that this endofunctor $E$ have and adjunction with the identity endofunctor $Id$ of a perfectly symmetric category $\textbf{C}$:
 \begin{propo} \label{prop:ISA} \textsc{Internal Symmetry Adjunction}: For each perfectly symmetric category $\textbf{C}$ there is the adjunction $(I_\textbf{C},E,\eta,\varepsilon)$ between its identity endofunctor $I_\textbf{C}:\textbf{C} \rightarrow\textbf{C}$ and the endofunctor $E$ of Definition \ref{def:intFunct}, with natural transformation $\eta:I_\textbf{C} \rTo^\bullet E$ such that for each object $c$ its component $\eta(c) = is_c^{-1}:c \rightarrow \widetilde{id_c}$ is an universal arrow from $c$ to $E$, and with natural transformation $\varepsilon:E \rTo^\bullet I_\textbf{C}$ such that for each object $b$ its component $\varepsilon(d) =is_d:\widetilde{id_d}\rightarrow d$ is a couniversal arrow from $I_\textbf{C}$ to $d$.

 Thus, the natural transformation of this adjunction are two natural isomorphisms, $\eta:I_\textbf{C} \simeq E$ and its inverse $\varepsilon:E\simeq I_\textbf{C}$.
 \end{propo}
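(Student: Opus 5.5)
The plan is to verify the adjunction $(I_\textbf{C},E,\eta,\varepsilon)$ directly, in its unit--counit form: naturality of $\eta$ and $\varepsilon$, the two triangle identities, and then the universal-arrow reformulation. Everything reduces to the isomorphisms $is_a:\widetilde{id_a}\simeq a$ of (\ref{eq:representatib}) and to the formula $E^1(f)=is_b^{-1}\circ f\circ is_a$ of Definition \ref{def:intFunct}.

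\emph{Step 1: $E$ is a functor.} Definition \ref{def:intFunct} states $H^1$ only for non-identity arrows, so I will first extend the same formula to identities. This gives $E^1(id_a)=is_a^{-1}\circ is_a=id_{\widetilde{id_a}}$. For a composite, the computation after (\ref{eq:transfF}) gives $E^1(g\circ f)=is_c^{-1}\circ g\circ is_b\circ is_b^{-1}\circ f\circ is_a=E^1(g)\circ E^1(f)$, so $E$ is a genuine endofunctor.

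\emph{Step 2: naturality.} For $f:a\rightarrow b$ we have
\[
E^1(f)\circ\eta(a)=is_b^{-1}\circ f\circ is_a\circ is_a^{-1}=is_b^{-1}\circ f=\eta(b)\circ f,
\]
so $\eta$ is natural. Symmetrically, $\varepsilon(b)\circ E^1(f)=f\circ is_a=f\circ\varepsilon(a)$, so $\varepsilon$ is natural.

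\emph{Step 3: triangle identities.} The first identity, $(\varepsilon I_\textbf{C})\bullet(I_\textbf{C}\eta)=1_{I_\textbf{C}}$, is immediate: $\varepsilon(c)\circ\eta(c)=is_c\circ is_c^{-1}=id_c$. The second, $(E\varepsilon)\bullet(\eta E)=1_E$, is the step I expect to be the main obstacle, because it needs the isomorphism attached to the object $d'=\widetilde{id_d}$ itself, namely $is_{d'}:\widetilde{id_{d'}}\rightarrow d'$. Care is required because $E^1$ of the arrow $is_d:d'\rightarrow d$ conjugates by $is_{d'}$ on the domain side and by $is_d^{-1}$ on the codomain side. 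Explicitly,
\[
E^1(\varepsilon(d))=E^1(is_d)=is_d^{-1}\circ is_d\circ is_{d'}=is_{d'}:\widetilde{id_{d'}}\rightarrow d'.
\]
Since $\eta(E^0(d))=\eta(d')=is_{d'}^{-1}$, the composite is $is_{d'}\circ is_{d'}^{-1}=id_{d'}=id_{E^0(d)}$, which is the required identity.

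\emph{Step 4: universality and isomorphisms.} To match the wording of the statement, I will also show directly that $\eta(c)$ is universal. Given any $g:c\rightarrow E^0(d)$, an arrow $f:c\rightarrow d$ satisfies $E^1(f)\circ\eta(c)=g$ exactly when $is_d^{-1}\circ f=g$. Since $is_d$ is invertible, this forces the unique solution $f=is_d\circ g$. Dually, $\varepsilon(d)$ is couniversal: any $h:E^0(c)\rightarrow d$ factors uniquely as $h=\varepsilon(d)\circ E^1(k)$ with $k=h\circ is_c$. Finally, every component $is_c^{-1}$ and $is_c$ is an isomorphism, and $\varepsilon(c)\circ\eta(c)=id_c$ and $\eta(c)\circ\varepsilon(c)=id_{\widetilde{id_c}}$. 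Hence $\eta:I_\textbf{C}\simeq E$ and $\varepsilon=\eta^{-1}:E\simeq I_\textbf{C}$ are mutually inverse natural isomorphisms, and the adjunction is in fact an adjoint equivalence.
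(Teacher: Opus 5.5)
Your proof is correct apart from one slip in Step 4, described below. It is organized differently from the paper's.

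\emph{Comparison with the paper.} The paper draws the two naturality squares, noting $\varepsilon(a)\circ\eta(a)=id_a$. It then obtains the adjunction from the universal-arrow characterization alone. Given $k:c\rightarrow E^0(d)$, it solves $k=E^1(\underline{k})\circ\eta(c)=is_d^{-1}\circ\underline{k}$ for the unique $\underline{k}=is_d\circ k$. It then reads the same equation as the couniversal triangle for $\varepsilon(d)$.

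That computation is exactly the first half of your Step 4, so the paper's argument is contained in yours. What you add is a direct check of the unit--counit axioms. This includes functoriality of $E$ on identities, which Definition \ref{def:intFunct} leaves uncovered because $H^1$ is stated only for non-identity arrows. It also includes both triangle identities.

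\begin{itemize}
\item \emph{The paper's route} is shorter and only ever conjugates by $is_c$ and $is_d$. It does rely on the standard fact that a natural family of universal arrows determines an adjunction.
\item \emph{Your route} avoids that fact and makes the adjoint-equivalence conclusion explicit. The cost is that the second triangle identity forces you to apply $E$ to $is_d$ itself, and hence to use the next-level isomorphism $is_{\widetilde{id_d}}$. You handle this correctly.
\end{itemize}

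\emph{The slip.} It is in the couniversality sentence of Step 4. For $h:E^0(c)\rightarrow d$ and $k:c\rightarrow d$ one has $\varepsilon(d)\circ E^1(k)=is_d\circ is_d^{-1}\circ k\circ is_c=k\circ is_c$. So the solution is $k=h\circ is_c^{-1}$; your $h\circ is_c$ does not typecheck.

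More importantly, that factorization expresses couniversality of $\varepsilon(d)$ relative to $E$, i.e.\ the adjunction $E\dashv I_{\textbf{C}}$. That adjunction also holds here because everything is invertible, but it is not what the statement asks. Couniversality from $I_{\textbf{C}}$ to $d$ means: every $h:c\rightarrow d$ equals $\varepsilon(d)\circ k$ for a unique $k:c\rightarrow\widetilde{id_d}$, namely $k=is_d^{-1}\circ h$. This is the paper's right-hand diagram.

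The correct property already follows from your Steps 2--3, since counit components of an adjunction are always couniversal. So this is a local correction, not a gap.
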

 \textbf{Proof:}
 For each arrow $f:a\rightarrow b$, we obtain the following two commutative diagrams generated by natural isomorphisms $\eta:I_\textbf{C} \simeq E$ and its inverse $\varepsilon:E\simeq I_\textbf{C}$, with $\eta(a) = is_a^{-1}$ and $\varepsilon(a) = is_a$, so that $\varepsilon(a)\circ \eta(a) =is_a\circ is_a^{-1} = id_a$,
  \begin{diagram}
a&&a =I_\textbf{C}(a) &&\rTo^{\eta(a)}&  \widetilde{id_a}= E(a)       &&\rTo^{\varepsilon(a)}          && a =I_\textbf{C}(a) \\
\dTo^{f}&\mapsto& \dTo^{f}_{=I_\textbf{C}(f)}&&&\dTo^{E(f)}_{= is_b^{-1}\circ f \circ is_a}     && &&     \dTo^{f}_{=I_\textbf{C}(f)}  \\
b&&b =I_\textbf{C}(b) &&\rTo^{\eta(b)}&  \widetilde{id_b}= E(b)       &&\rTo^{\varepsilon(b)}          && a =I_\textbf{C}(b) \\
&&I_\textbf{C}&&\rTo^{\eta}&E&&\rTo^{\varepsilon}&& I_\textbf{C}
\end{diagram}

 Let us show that for any object $a$ in $\textbf{C}$ and arrow $k:c\rightarrow E(d)$ there exists a unique arrow $\underline{k}:I_\textbf{C}(c) \rightarrow d$ (that is, $\underline{k}:c\rightarrow d$), such that the hollowing adjunction diagrams commute:
 \begin{equation} \label{fig:univ-arrowLagr5V}
 \begin{diagram}
 c~         &\rTo^{\eta(c)}          &~ \widetilde{id_c}= EI_\textbf{C}(c)   &&& & c=I_\textbf{C}(c)&\rTo^{I_\textbf{C}(k)} &I_\textbf{C}E(d)  \\
  & \rdTo_{k}&     \dTo_{E(\underline{k})}&&&&\dTo^{\underline{k}} &\ldTo_{\varepsilon(d) = is_d} &     \\
 &   &   \widetilde{id}_d = E(d)   &&&&d
\end{diagram}
\end{equation}
So, from the left-hand diagram above,
\begin{equation} \label{fig:univ-arrowLagr5V1}
k= E(\underline{k})\circ\eta(c) =(is_d^{-1}\circ \underline{k}\circ is_c)\circ_c)\circ is_c^{-1}  = is_d^{-1}\circ \underline{k}
\end{equation}
  Thus, we obtain the composition of arrows

$is_d \circ k = is_d \circ (is_d^{-1}\circ \underline{k}) = $

$= (is_d \circ is_d^{-1})\circ \underline{k} = id_{\widetilde{id_d}}\circ \underline{k}$

$= \underline{k}$, \\
which is just the right-hand commutative diagram above, so defining uniquely the arrow $\underline{k}:c\rightarrow d$ for a given arrow $k$.
\\$\square$\\
\textbf{Remark}: Note that as the consequence of this proposition we obtain that all isomorphic arrows in the commutative diagram (\ref{eq:representatiV}) ate \emph{universal arrows} in $\textbf{C}$, that is, the fundamental properties of the PSC categories are that in it we have the internal symmetry adjunction and these universal arrows that are preserved under comma-transformations symmetry group over all n-dimensional levels. Thus, if a given category $\textbf{C}$ is a PSC, then all its n-dimensional levels $\textbf{C}_n$, for $n\geq 1$, are PSC as well (because the (co)universal arrows and adjunctions are preserved  under the global symmetry).
\\$\square$\\
 Thus, by consecutive repetition of the application of this endofunctor $E$ on the perfectly symmetric category $\textbf{C}$, we obtain this infinite chain of transformations of basic commutative diagram $f=g\circ h:a\rightarrow b$ into commutative diagrams $f_n = g_n\circ h_n$, for $n =1,2,3,...$, where $f_n = (E^1)^n(f)$ (and analogously for $g:a\rightarrow c$ and $h:c\rightarrow b$),
 \begin{equation} \label{fig:dualsymCh}
\begin{diagram}
 a&&    && \widetilde{id_a} &&    &&\widetilde{id}_{\widetilde{id_a}} & && \\
 \dTo_{g}& \rdTo^{f = h\circ g}&     && \dTo_{g_1}& \rdTo^{f_1 = h_1\circ g_1}&     &&\dTo_{g_2}& \rdTo^{f_2= h_2\circ g_2}& &...&    \\
 c&\rTo^{h}&   b   &&\widetilde{id_c}& \rTo^{h_1}    &   \widetilde{id_b}  &&\widetilde{id}_{\widetilde{id_c}} &  \rTo^{h_2} &   \widetilde{id}_{\widetilde{id_b}}  &&\\
 &\textbf{C} &      &\rTo^E&&    \textbf{C} &    &\rTo^E&&   \textbf{C} &  ... &&
\end{diagram}
\end{equation}
Note that in this recursive application of functor $E$, for each $n = 1,2,3,...$, and for example the arrow $g$,
$$g_{n+1} =E^1(g_n) =is_{cod(g_n)}^{-1}\circ g_n \circ is_{dom(g_n)}$$
is an arrow between conceptualized objects of the (n+1)-level, so by application of functorial transformation $E$ we pass into next higher level of conceptualized objects.

This recursive application of the functorial transformation $E$ on this category $\textbf{C}$ can be shown as passing from a given level to the next higher level of commutative diagrams in this infinite chain of commutative diagrams in $\textbf{C}$:
\begin{diagram}
& &a&\rTo^{is_a^{-1}}&&\widetilde{id_a} & \rTo^{is_{\widetilde{id_a}}^{-1}}&&    \widetilde{id}_{\widetilde{id_a}} & \rTo&... \\
~~~~in ~\textbf{C}&& \dTo_{g}&1st &&      \dTo^{g_1=}_{E^1(g)}& & 2nd    &\dTo_{g_2=E^1E^1(g)}& & &...    \\
 &&c&\rTo^{is_c^{-1}}&&   \widetilde{id_c}& \rTo^{is_{\widetilde{id_c}}^{-1}}&    &   \widetilde{id}_{\widetilde{id_c}} &  \rTo &   ...  &\\
~~~~~~in~\textbf{C}\downarrow\textbf{C} &&J(g)&\rTo^{(is_a^{-1}; is_c^{-1})}&&J(g_1) &\rTo^{(is_{\widetilde{id_a}}^{-1}; is_{\widetilde{id_c}}^{-1})}&&J(g_2)&\rTo&... \\
\end{diagram}
so with the isomorphisms of the objects $J(g)\simeq J(E^1(g)) \simeq J(E^1E^1(g)) \simeq ...$ in $\textbf{C}\downarrow\textbf{C}$ as well\footnote{Note that if $\textbf{C}$ is also a CoCC then in it we have also the isomorphisms of conceptualized objets $\widetilde{g} \simeq \widetilde{E^1(g)} \simeq \widetilde{E^1E^1(g)})\simeq ...$ for each arrow $g$ in $\textbf{C}$.}.

Consequently, from Definition \ref{def:newIsomorph} of isomorphism between arrows and the bottom line of the arrows in $\textbf{C}\downarrow\textbf{C}$ above the isomorphisms $g \simeq g_1 \simeq g_2 \simeq \simeq ...$, that is, $g \simeq E^1(g) \simeq E^1E^1(g)\simeq ...$, we obtain that the endofunctor $E$ transforms each commutative diagram into isomorphic to diagram in $\textbf{C}$, that is, for each arrow $g:a \rightarrow c$,

$E^0(a) \simeq a$

$E^1(g) \simeq g$

$E^0(c) \simeq c$
\\
we obtained the isomorphic to it arrow $E^1(g):E^0(a)\rightarrow E^0(c)$ in $\textbf{C}$, and because of this property the endofunctor $E$ is \emph{an invariant (up to isomorphism) transformation}.

These transformations are analog to that of comma-propagation transformations between n-dimensional levels (categories $\textbf{C}_n$ for $n=1,2,3,...$) which generate a \emph{global} symmetry, valid \emph{for every} category $\textbf{C}$. However, in this case all transformations are inside the same perfectly symmetric category $\textbf{C}$, so because of that we call it "internal symmetry" and \emph{local} as well because is not valid for every category but only locally for a category that is perfectly symmetric.\\
Thus, now we are able to define formally this Internal Category-Symmetry Group (ICS):
\begin{definition}\label{def:CatSymGroupINT}
 We define the following infinite abelian ICS (internal category-symmetry group) $ICS(\mathbb{N}) =(\circ, g_0,g_1, g_2, ...)$ of functorial transformations of a given PSC category $\textbf{C}$, with composition operation $\circ$ corresponding to composition of functors, identity element $g_0$ equal to the identity functor $I_\textbf{C}$, i.e., $g_0 = (I_\textbf{C}^0, I_\textbf{C}^1)$,  transformation element $g_1$  is the functor $E =(E^0,E^1):\textbf{C} \rightarrow \textbf{C}$ in Definition \ref{def:intFunct}, and for each natural number $k \in \mathbb{N}$, for $k\geq 2$,
 \begin{equation}\label{eq:CatSymGroup}
 g_k =_{def} \overbrace{g_1\circ ...\circ g_1}^k   ~= (\overbrace{E^0...E^0}^k, \overbrace{E^1...E^1}^k) ~~
 \end{equation}
 such that for any two elements $g_n$ and $g_m$, $n,m \in \mathbb{N}$, we have that $g_k\circ g_m = g_{n+m}$.
\end{definition}
So, if we denote by $(\mathbb{N},+) =(+,0,1,2,...)$ the group of natural numbers with addition  as group operation and $0$ as identity element, then we have simple isomorphism of groups:
\begin{equation}\label{eq:CatSymGroupIso}
 \sigma:(\mathbb{N},+)\simeq ICS(\mathbb{N})
 \end{equation}
 Let us consider now the actions of the internal-symmetry group on the  categorial concepts (objects and arrows of $\textbf{C}$):
 \begin{definition}\label{def:CatSymAction}
 We define the action of the $ICS(\mathbb{N})$ group on any PSC category $\textbf{C}$ written, for any $n \in \mathbb{N}$, by
   \begin{equation} \label{innerProduct}
 (g_n,\overline{c}) \mapsto
 \left\{
    \begin{array}{ll}
    \overbrace{E^0...E^0}^n(\overline{c}), & \hbox{if $~~\overline{c}~~$ is an object of $~~\textbf{C}$}\\
     \overbrace{E^1...E^1}^n(\overline{c}), & \hbox{if $~~\overline{c}~~$ is an arrow of $~~\textbf{C}$}
       \end{array}
  \right.
  \end{equation}
  \end{definition}
Also this passive functorial transformation $E$  of "conceptual lifting" from $\textbf{C}$ into $\widetilde{\textbf{C}}$ is an invariance w.r.t the base category $\textbf{C}$, that is, represents the perfect categorial symmetry.

Note that differently from the comma-propagation passive "Galilean boost" transformations which there exist always, the passive conceptual-lifting transformation there exist only for the perfectly symmetric categories satisfying Definition \ref{def:PerfectSym}.
\section{Conclusions}

We introduced a general phenomena of \emph{internal-symmetry transformations}, in confrontation with  the global categorial symmetry.
Like for the global categorial symmetry also local internal categorial symmetry moves beyond standard geometric or Hamiltonian symmetries (like those of a triangle or quantum Hamiltonians) to a universal framework for categorial symmetry. It provides tools to understand symmetries that operate across different levels of mathematical abstraction, relevant for physics where symmetries (like Lorentz transformations) relate to conservation laws.

In essence, global and local (internal) symmetry groups  are a highly abstract concept exploring how symmetries propagate and interrelate through the layered structure of category theory, offering a deep, universal perspective on what "symmetry" means across mathematics and physics.

About the relationships between the local internal symmetry with the symmetry group $ICS$ of a category $\textbf{C}$ and the global comma-propagation symmetry with the group $CS(Z)$  between its n-dimensional levels \cite{Majk25a}, we conclude the following:
\begin{itemize}
  \item Like the fact that not every category is a CCC (Cartesian Closed Category), not every category is a PSC (Perfectly Symmetric Category). Both of such categories must have a particular set of (co)universal arrows and other properties like adjunctions: each PSC must have the internal symmetry adjunction  defined in Proposition \ref{prop:ISA} and (co)universal arrows that are the components of the natural isomorphisms $\eta$ and $\varepsilon$. \\ Note that such isomorphic (co)universal arrows are direct consequences of the duality representation constraint: for any given object $a$ in this category,  the conceptualized object of its identity arrow is a dual representation of this object $a$, that is, must be isomorphic to it, $is_a:B_\top(id_a) \simeq a$, as defined by (\ref{eq:representatib}) in Definition \ref{def:PerfectSym}.
  \item Like in Physics where the internal symmetry given by the symmetry group of transformations $U(1)$  of some material object represented by de Broglie field $\Psi$ in Example 2, and differently form the time-space translations and rotations, are not geometric transformations by a kind of \emph{abstract} transformations, also  internal categorial transformations are not geometric but abstract functorial transformations local to a given PSC category.

       Thus, such categorial transformations applied to any object $a$ of a category does not change this object of transformation "up to isomorphism" ($a \simeq E(a) \simeq E\circ E(a) \simeq E\circ E\circ E (a)\simeq...$).
  \item Differently from the internal symmetry, the global symmetry is the property \emph{of all} categories. Comma-propagation is a general transformation between n-dimensional levels, applied to the functors and natural transformations as well.
       The comma-propagation symmetry group  $CS(Z)$ of transformations of "Galilean boost" of uniform motion (as in time-space in physics) between  n-dimensional levels  $\textbf{C}_n$  (seen as different "\emph{Galilean reference frames}") of any given category  $\textbf{C}$, defines invariant (conserved) categorial properties (in analogy with conservation of momentum and energy under Galilean boosts in physics).

      If we consider the universal properties of a given base category $\textbf{C}$ (its (co)universal arrows and adjunctions) as a kind of different "Lagrangian" in the Category Theory, then we can consider their invariance under this comma-propagation transformations: a \emph{global symmetry} in this case means that these categorial universal structures are preserved under these general transformations \emph{in all} n-dimensional levels (the law of conservation).

      Thus, global symmetry holds also for each PSC category, so that all n-dimensianal levels of such perfectly symmetric category  are PSC categories (as for a given CCC category $\textbf{C}$, all its n-dimensional levels $\textbf{C}_n$, for $n \geq 1$, are CCC as well) preserving its  (co)universal arrows and internal symmetry adjunction, and generally all commutative diagrams, as we shown in Section 2 for all the cases in the hierarchy of internally symmetric categories.
\end{itemize}
%

%

%

\end{document}